\definecolor{refkey}{gray}{.75}
\definecolor{labelkey}{gray}{.75}
\newcommand{\Z}{\mathbb Z}
\newcommand{\N}{\mathbb N}
\newcommand{\E}{\mathbb E}
\newcommand{\diff}{\mathrm{d}}
\newcommand{\pr}{\mathbb P}
\newtheorem{teo}{Theorem}[section]
\newtheorem{cor}[teo]{Corollary}
\newtheorem{rem}[teo]{Remark}
\newtheorem{pro}[teo]{Proposition}
\newtheorem{defn}[teo]{Definition}
\newtheorem{exmp}[teo]{Example}
\title
{Galton-Watson processes in varying environment and accessibility percolation}
\author[D.~Bertacchi]{Daniela Bertacchi}
\address{D.~Bertacchi, Dipartimento di Matematica e Applicazioni,
Universit\`a di Milano--Bicocca,
via Cozzi 53, 20125 Milano, Italy.}
\email{daniela.bertacchi\@@unimib.it}
\author[P.~M.~Rodriguez]{Pablo Martin Rodriguez}
\address{P.~M~.Rodriguez, Departamento de Matem\'atica Aplicada e Estat\'istica, Insituto de Ci\^encias Matem\'aticas e de Computa\c{c}\~ao, Universidade de S\~ao Paulo, Av. Trabalhador S\~ao-carlense 400, Centro, 13560-590 S\~ao Carlos, SP, Brazil.}
\email{pablor\@@icmc.usp.br}
\author[F.~Zucca]{Fabio Zucca}
\address{F.~Zucca, Dipartimento di Matematica,
Politecnico di Milano,
Piazza Leonardo da Vinci 32, 20133 Milano, Italy.}
\email{fabio.zucca\@@polimi.it}
\begin{document}

\begin{abstract}
This paper deals with branching processes in varying environment, namely, whose offspring distributions depend on the 
generations. We provide sufficient conditions for survival or extinction which rely only on the first and second moments of the
offspring distributions. These results are then applied
to branching processes in varying environment with selection where every particle has a real-valued label and labels can only increase along 
genealogical lineages;
we obtain analogous conditions for survival or extinction. These last results can be interpreted in terms of
accessibility percolation on Galton-Watson trees, which represents a relevant tool for modeling the evolution of biological populations.
\end{abstract}


\maketitle
\noindent {\bf Keywords}: 
branching process, 
time inhomogeneous,
varying environment,
fitness, 
selection, 
accessibility percolation,
generating function.

\noindent {\bf AMS subject classification}:  60J80, 60J85.

\section{Introduction}
\label{sec:intro}

A branching process in varying environment (or \textit{BPVE}), also called \textit{time-inhomogeneous} branching process, is the generalization of the classical Galton-Watson 
process when the offspring distributions may vary according to the generations. 
The limit behaviour of these processes was firstly studied in \cite{cf:Agresti75,cf:Church71,cf:Jagers74,cf:Lindvall74}, and later in \cite{cf:CJ94,cf:Bigg}, 
among others. We refer the reader to \cite{cf:Jagers75} for a survey of earlier results about this topic
and for biological motivations. See also \cite{cf:BM08} for a recent study on the 
survival properties of these processes and on its connection with percolation theory on trees.

A natural generalization of the branching process is the branching random walk (or \textit{BRW}) where each particle is placed inside a space $X$ or, equivalently, it
is assigned a type (chosen in the space $X$).
BRWs are particularly relevant throughout the paper, since there is a natural identification of a BPVE with a BRW on the space $\N$ (see Section~\ref{sec:BPVE} 
for details). The case when the space $X$ is at most countable is well studied and understood in both continuous time and discrete time:
we refer the reader, for details and results on BRWs
to \cite{cf:BZ, cf:BZ3, cf:BZ15, cf:PemStac1} (continuous time), 
\cite{cf:Hautp16, cf:Bul15, cf:Bul15-2, cf:GMPV09, cf:Hautp12, cf:Hautp13, cf:MachadoMenshikovPopov, cf:MP03}
(discrete time); 
see also \cite{cf:BZ4} for a survey on the topic. Examples of BRWs with countable 
space $X$ (along with some variants) and their biological applications are presented in \cite[Ch.7]{cf:KA02}.

The case when either the space $X$ is uncountable or there is a non-trivial interaction among the particles is less understood and there is not  
a well-established systematic theory available
and, in general, different processes have to be studied with different tools
(see for instance \cite{cf:BPZ}).
 As far as we know, only a small number of papers are devoted to BRWs where the space $X$ is an uncountable set. 
 One example of such a process is proposed in \cite{cf:BLSW91}, where the positions of the particles are interpreted
 as types (\textit{reproductive prowess} to be exact) and it is assumed that a child is likely to be weaker (in some way) than its parent 
 and children who are too weak will not reproduce; the authors obtain conditions for survival on a family line.
 Another example of model with uncountably many types is \cite{cf:Schin14}, where type is the fitness of the individual.

The purpose of this paper is twofold. On the one hand, we provide conditions for survival and extinction of a BPVE.
On the other hand we apply these results to branching processes in varying environment with selection  (or \textit{BPWS}).
We obtain sufficient conditions for extinction or survival of a BPVE using a generating function approach.
In particular, we exploit the fact that survival is equivalent to the existence of a nontrivial fixed point of the generating
function of the process (Theorem~\ref{th:survival}).
We provide a sufficient condition for almost sure extinction, which involves only
the sequence of the first moments of the reproduction laws (Proposition~\ref{th:main0ext}).
Note that the conditions for survival cannot rely only on the first moments. 
Indeed given any sequence of first moments it is possible to construct a corresponding BPVE 
which dies out almost surely (see Example~\ref{exmp:largem_nextinction}).
The strategy to prove survival is to study the associated BRW on $\N$
and show that it is sufficient to control the ratio between the second moment and the product of the first moments 
of the reproduction laws.
In the second part of the paper, we study the behaviour
of a general class of branching processes in varying environment with selection (or \textit{BPWS}) which are, actually,
BRWs in varying environment 
on an uncountable space.
This class of processes 
is obtained by associating a (random) real value to each new individual, say a fitness, 
and by assuming that only those children who have a fitness greater than its parent may survive and reproduce. We shall 
see that the BPWS 
is related to the \textit{accessibility percolation model} on regular trees introduced in \cite{cf:NK},
and recently studied on spherically symmetric trees in \cite{cf:CGR14} (see Section~\ref{sec:BPWS}). 
This BRWS is useful for modeling the evolution of species (for similar models see for instance \cite{cf:GMS11,cf:GMS13, cf:LS09}).


Here is the outline of the paper.
In Section~\ref{sec:BPVE} we introduce the notion of BPVE and we describe the identification between a BPVE and a BRW on $\N$.
By means of this identification, we translate a characterization of survival for BRWs (Theorem~\ref{th:survival}) 
into a similar result for BPVEs (Proposition~\ref{pro:BPVE-BRW}) which can be applied 
to obtain
sufficient conditions for survival for BPVEs (Theorem~\ref{th:main0} and Corollary~\ref{cor:main0}). 
At the end of the section we compare these conditions to other results
found in the literature, while in Example~\ref{exm:continuous} we describe some explicit families of
offspring distributions to which Theorem~\ref{th:main0} applies.  
A sufficient condition for almost sure extinction is given in Proposition~\ref{th:main0ext}.
Unlike the time-homogeneous BP where, provided that the probability of having exactly one child is strictly smaller than one
then (almost sure) extinction is equivalent to having an average number of children not stricly larger than one, in the time-inhomogeneous case 
slightly counterintuitive situations occur. Indeed, denote by $m_n$ the average number of children of a particle of the $n$th generation;
on the one hand when $m_n<1$ for all $n \in \N$ there might be survival (see Example~\ref{exmp:less1surv}) and
on the other hand for any sequence of first moments $\{m_n\}_{n\in \N}$ (even when $m_n\to+\infty$) there are examples of almost sure extinction 
(see Example~\ref{exmp:largem_nextinction}).

Section~\ref{sec:BPWS} is devoted to the definition of a generic BPWS and its connection with the accessibility percolation model.
A condition for the extinction of a BPWS is given in Proposition~\ref{pro:extinction}, while the main condition for survival is given
in Theorem~\ref{th:main1}.

\section{Branching processes in varying environment}
\label{sec:BPVE}

\subsection{Basic definitions} \label{subsec:basicBPVE}

We begin by defining a \textit{branching process in varying environment} or \textit{BPVE}. The process starts with one particle at time $0$
(this is the $0$th generation).
The random number of particles generated by each particle in the $n$th generation has generating function
$\Phi_n(z):=\sum_{i=0}^{+\infty} \rho_n(i) z^i$ and we define a sequence of random variables $\{W_n\}_{n \in \mathbb{N}}$ by
$\pr(W_n=i):=\rho_n(i)$. Thus, $W_n$ represents the ``typical'' random number of children of a particle in
the $n$th generation; all the particles behave independently.

More formally, the BPVE is the stochastic process $\left\{Z_n\right\}_{n\in \mathbb{N}}$ such that
$$Z_{n+1}:=\sum_{j=1}^{Z_{n}}W_{n,j}, \qquad n \ge 0$$
where $Z_n$
is the number of particles in the $n$th generation, $Z_0$ is the initial state ($Z_0=1$ in our case)
and 
$\left\{W_{n,j}\right\}_{j\geq 1, n \ge 0}$ is a family of independent variables such that
$\left\{W_{n,j}\right\}_{j\geq 1}$ are identically distributed copies of $W_{n}$. 
As usual, we say that the BPVE becomes extinct almost surely if $p_e:=\pr \big (\bigcup_{n\geq 1}\{Z_n =0\} \big ) =1$;
 otherwise, we say that it survives
with positive probability (``almost surely'' and ``with positive probability'' will often be tacitly understood). 
If we define $H_0(z):=z$ for all $z \in [0,1]$ and, recursively, $H_{n+1}:= H_n \circ \Phi_{n}$, it is not difficult
to show that $H_n(0)$ is the probability that the population is extinct at time $n$; in particular $H_n(0) \uparrow p_e$ as $n \to +\infty$.
The probability of extinction is monotone with respect to $\{\Phi_n\}_{n \in \mathbb{N}}$, meaning that, if
$\Phi_n \ge \overline \Phi_n$ (where $\{\overline \Phi_n\}_{n \in \mathbb{N}}$ is the sequence of generating functions related to another BPVE with 
extinction probability $\bar p_e$), then by induction 
$H_n \ge \overline H_n$ and thus
$p_e \ge \bar p_e$. 

In order to avoid trivial situations we require that $\Phi_n(0)<1$ for all $n \in \mathbb{N}$, that is, there is always a nonzero probability
of having at least one child for a particle in any generation. This implies that there is always a positive probability
of finding descendants in the $n$th generation for any given $n$, that is, $H_n(0)<1$ for all $n \in \N$.


The main idea behind our results is the interpretation of a BPVE as a particular case of 
branching random walk. 
Indeed in a branching process all the particles are 
indistinguishable. In a branching random walk, on the other hand, particles  live
on a spatial structure and are thus characterized by their position (which can also be interpreted
as their \textit{type}).
More precisely, given a BPVE, we associate a BRW by considering the time variable $n$ as a spatial variable.

 
 A discrete-time BRW on an at most countable set $X$
 is a stochastic process $\{\eta_n\}_{n \in \N}$, where 
 $\eta_n(x)$ represents the number of particles alive
at $x \in X$ at time $n$. More formally,
consider a family $\nu=\{\nu_x\}_{x \in X}$
of probability measures 
on the (countable) measurable space $(S_X,2^{S_X})$
where $S_X:=\big \{f:X \to \N\colon \sum_{y \in X} f(y)<+\infty \big \}$.
To obtain generation $n+1$ from generation $n$ we proceed as follows:
a particle at site $x\in X$ lives one unit of time,
then a function $f \in S_X$ is chosen at random according to the law $\nu_x$
and the original particle is replaced by $f(y)$ particles at
$y$, for all $y \in X$; this is done independently for all particles of
generation $n$.
Note that the choice of $f$ simultaneously assigns the total number of children
and the location where they will live.


We consider initial configurations with only one particle placed at a fixed site $x$:
let $\pr^{\delta_{x}}$ be the law of this process. 

\begin{defn}\label{def:survival}

The BRW \textsl{survives (globally) with positive probability} starting from $x$ if
$
\bar {\mathbf{q}}(x):=
1- \pr^{\delta_x} \Big (\sum_{w \in X} \eta_n(w)>0, \forall n \in N \Big )
<1.
$

\end{defn}
We remark here that a globally surviving BRW can also survive locally, meaning that 
with positive probability there will be infinitely many returns to the starting location.
Since here we are just interested in the global survival, we refer the reader to \cite{cf:BZ2, cf:Z1}
for details.

Global survival can be characterized by using a generating function associated to the BRW:
namely the function $G:[0,1]^X \to [0,1]^X$ where, 
for all ${\mathbf{q}} \in [0,1]^X$, $G({\mathbf{q}}) \in [0,1]^X$ is the following weighted sum of (finite) products
\[ 
G({\mathbf{q}}|x):= \int_{S_X} \nu_x(\diff f) \prod_{y \in X} {\mathbf{q}}(y)^{f(y)}=
\sum_{f \in S_X} \nu_x(f) \prod_{y \in X} {\mathbf{q}}(y)^{f(y)},
\] 
$G({\mathbf{q}}|x)$ being the $x$ coordinate of $G({\mathbf{q}})$.

Note that $[0,1]^X$ is a partially ordered set where $\mathbf{q} \ge \mathbf{z}$ if and only if 
 $\mathbf{q}(x) \ge \mathbf{z}(x)$ for all $x \in X$; clearly $\mathbf{q} > \mathbf{z}$ stands for
 ``$\mathbf{q} \ge \mathbf{z}$ and $\mathbf{q}(x) > \mathbf{z}(x)$ for some $x \in X$''.
The function $G$ is nondecreasing and continuous with respect to the product topology on $[0,1]^X$ and
the family $\{\nu_x\}_{x \in X}$ is uniquely determined by this generating function. 

It is easy to show (see for instance \cite[Corollary 2.2]{cf:BZ2} or the proof of Theorem~\ref{th:survival}) that $\bar {\mathbf{q}}$
is the smallest solution of $G(\mathbf{q}) \le \mathbf{q}$ in $[0,1]^X$, in particular it is the smallest
fixed point of $G$ in $[0,1]^X$, that is $G(\bar {\mathbf{q}})=\bar {\mathbf{q}}$.

Define the first moments $m_{xy}:=\sum_{f \in S_X} f(y)\nu_x(f)$; denote by $m_{xy}^{(0)}:=\delta_{xy}$ and
$m_{xy}^{(n+1)}:=\sum_{w \in X} m^{(n)}_{xw}m_{wy}$ for all $n \in \N$ 
(clearly, by using $+\infty\cdot 0:=0$ and $+\infty \cdot x := +\infty$ for all $x>0$, we have
$m_{xy}^{(n)} \in [0,+\infty]$ for all $n \in N$, $x,y \in X$). Given $\mathbf{v} \in [0,+\infty]^X$ we define
$Mv \in  [0,+\infty]^X$ by $Mv(x):=\sum_{w \in X} m_{xw} v(w)$; clearly $M^nv(x)=\sum_{w \in X} m^{(n)}_{xw} v(w)$.
The following theorem characterizes
global survival; it appears, in different flavors, in \cite[Theorem 4.1]{cf:Z1} or \cite[Theorem 3.1]{cf:BZ14-SLS} and 
it is based on \cite[Proposition 2.1]{cf:BZ2}. Unlike those reults, here we remove the requirement that  
$\sum_{y \in X} m_{xy}$ is bounded; hence we write the proof which is slightly different from
the ones in the above cited papers. Henceforth,  by $\mathbf{0}, \mathbf{1} \in[0,1]^X$ we mean the constant functions
$\mathbf{0}(x):=0$, $\mathbf{1}(x):=1$
for all $x \in X$; note that $G(\mathbf{1})=\mathbf{1}$.

\begin{teo}\label{th:survival}
 Consider a BRW and a fixed $x \in X$. The following statements are equivalent:
 \begin{enumerate}
  \item $\bar {\mathbf{q}}(x)<1$ (i.e.~there is global survival starting from $x$);
  \item there exists
${\mathbf{q}}\in [0,1]^X$ such that ${\mathbf{q}}(x)<1$ and $G(\mathbf{q}) \le \mathbf{q}$ 
(i.e.~$G({\mathbf{q}}|y) \le {\mathbf{q}}(y)$, for all $y \in X$);
  \item there exists
${\mathbf{q}}\in [0,1]^X$ such that ${\mathbf{q}}(x)<1$ and $G(\mathbf{q}) = \mathbf{q}$ 
(i.e.~$G({\mathbf{q}}|y) = {\mathbf{q}}(y)$, for all $y \in X$).
 \end{enumerate}
 If $\mathbf{q}$ satisfies either $(2)$ or $(3)$, then $\mathbf{q} \ge \bar {\mathbf{q}}$.
 Moreover, 
 global survival starting from $x$ implies that $\liminf_{n \to +\infty} \sum_{y \in X} m_{xy}^{(n)}>0$. 
 \end{teo}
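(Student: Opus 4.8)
The plan is to prove the theorem by establishing a chain of implications among the three statements, together with the two auxiliary claims at the end. I will organize the argument around the behavior of the iterates $G^n(\mathbf{0})$, exploiting that $G$ is nondecreasing and continuous in the product topology on the compact set $[0,1]^X$. The natural backbone is $(3)\Rightarrow(2)$ trivially, so the real content lies in $(2)\Rightarrow(1)$ and $(1)\Rightarrow(3)$, plus the moment statement.

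First I would recall that $\bar{\mathbf{q}}$, the vector of non-survival probabilities, satisfies $\bar{\mathbf{q}} = G(\bar{\mathbf{q}})$ and is in fact the smallest fixed point, as already noted in the excerpt. To make this self-contained I would observe that the extinction event up to time $n$ has probability $G^n(\mathbf{1})\,$; more precisely, if $\mathbf{q}_n$ denotes the probability that the BRW started at each site has died out by generation $n$, then $\mathbf{q}_0=\mathbf{0}$ and $\mathbf{q}_{n+1}=G(\mathbf{q}_n)$, so $\mathbf{q}_n = G^n(\mathbf{0})\uparrow \bar{\mathbf{q}}$ by monotone convergence and continuity of $G$. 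This immediately gives $G(\bar{\mathbf{q}})=\bar{\mathbf{q}}$ and shows $(3)\Rightarrow(1)$ fails only when the fixed point is trivial, so I really want the monotone-iteration comparison. For $(2)\Rightarrow(1)$: given $\mathbf{q}$ with $G(\mathbf{q})\le\mathbf{q}$ and $\mathbf{q}(x)<1$, I would argue that $\mathbf{0}\le\mathbf{q}$ forces $G^n(\mathbf{0})\le G^n(\mathbf{q})\le\mathbf{q}$ by monotonicity and induction, hence $\bar{\mathbf{q}}=\lim_n G^n(\mathbf{0})\le\mathbf{q}$, so $\bar{\mathbf{q}}(x)\le\mathbf{q}(x)<1$, which is global survival. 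This simultaneously proves the subordination claim ``$\mathbf{q}$ satisfying $(2)$ or $(3)$ implies $\mathbf{q}\ge\bar{\mathbf{q}}$.'' For $(1)\Rightarrow(3)$ I simply take $\mathbf{q}=\bar{\mathbf{q}}$, which is a fixed point with $\bar{\mathbf{q}}(x)<1$.

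For the final assertion about moments, the plan is to show global survival forces $\liminf_n \sum_{y}m_{xy}^{(n)}>0$ by a contrapositive/first-moment argument. I would use the standard inequality that non-survival probabilities dominate via convexity: since $G({\mathbf{q}}|x)$ is a weighted average of products $\prod_y {\mathbf{q}}(y)^{f(y)}$, Jensen or the elementary bound $\prod_y t_y^{f(y)} \ge 1 - \sum_y f(y)(1-t_y)$ yields $1-G({\mathbf{q}}|x)\le \sum_y m_{xy}(1-{\mathbf{q}}(y))$, i.e.\ $\mathbf{1}-G(\mathbf{q})\le M(\mathbf{1}-\mathbf{q})$ coordinatewise. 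Iterating this linearized bound starting from $\bar{\mathbf{q}}$ gives $1-\bar{\mathbf{q}}(x)=1-G^n(\bar{\mathbf{q}})(x)\le M^n(\mathbf{1}-\bar{\mathbf{q}})(x)=\sum_y m_{xy}^{(n)}(1-\bar{\mathbf{q}}(y))\le \sum_y m_{xy}^{(n)}$, using $\mathbf{1}-\bar{\mathbf{q}}\le\mathbf{1}$. Since global survival means $1-\bar{\mathbf{q}}(x)>0$, the right-hand side stays bounded below by this positive constant for every $n$, whence $\liminf_n\sum_y m_{xy}^{(n)}\ge 1-\bar{\mathbf{q}}(x)>0$.

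The main obstacle I anticipate is making the linearized inequality $\mathbf{1}-G(\mathbf{q})\le M(\mathbf{1}-\mathbf{q})$ rigorous in the present generality, where the excerpt has explicitly \emph{dropped} the hypothesis that $\sum_y m_{xy}$ is bounded, so the first moments may be infinite. One must be careful that the product $\prod_y {\mathbf{q}}(y)^{f(y)}$ is always well defined (each $f\in S_X$ has finite support, so it is a genuine finite product), and that the interchange of summation over $f$ with the elementary per-$f$ inequality is valid by Tonelli, since all terms are nonnegative; the bound then reads $\le\sum_y m_{xy}(1-\mathbf{q}(y))$ with the convention $+\infty\cdot 0=0$ already fixed in the text. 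The only place the unboundedness could bite is the iteration, but since every inequality is between nonnegative extended-real quantities and we only ever pass to a \emph{lower} bound on $\sum_y m_{xy}^{(n)}$, no finiteness is needed — the chain of inequalities survives even when some $m_{xy}^{(n)}=+\infty$. I would flag this explicitly, as it is precisely the point where the present proof must diverge from the cited ones in \cite{cf:Z1,cf:BZ14-SLS,cf:BZ2}.
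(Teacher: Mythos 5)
Your proof of the equivalence of (1)--(3) and of the minimality claim coincides with the paper's: both iterate $\mathbf{q}_{n+1}=G(\mathbf{q}_n)$ from $\mathbf{q}_0=\mathbf{0}$, identify $\bar{\mathbf{q}}$ as the pointwise limit (hence a fixed point by continuity), and get $(2)\Rightarrow(1)$ by the monotone induction $\mathbf{q}_n\le\mathbf{q}$. (Your passing remark that the extinction-by-time-$n$ probability is $G^n(\mathbf{1})$ is a slip --- $G^n(\mathbf{1})=\mathbf{1}$ --- but you immediately restate it correctly as $G^n(\mathbf{0})$, so nothing is affected.) Where you genuinely diverge is the final moment claim. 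The paper argues probabilistically: it writes $\sum_{y}m^{(n)}_{xy}=\mathbb{E}^x\bigl[\sum_y\eta_n(y)\bigr]$, conditions on the survival event $S$ on which $\sum_y\eta_n(y)\ge 1$, and concludes $\sum_y m^{(n)}_{xy}\ge\pr^{\delta_x}(S)>0$. You instead linearize the generating function via $\prod_y t_y^{f(y)}\ge 1-\sum_y f(y)(1-t_y)$ to get $\mathbf{1}-G(\mathbf{q})\le M(\mathbf{1}-\mathbf{q})$, iterate at the fixed point $\bar{\mathbf{q}}$, and obtain $\sum_y m^{(n)}_{xy}\ge 1-\bar{\mathbf{q}}(x)$ --- numerically the same lower bound, since $1-\bar{\mathbf{q}}(x)=\pr^{\delta_x}(S)$. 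Both arguments are valid without any boundedness of $\sum_y m_{xy}$: yours because every step is an inequality between nonnegative extended reals handled by Tonelli and the convention $+\infty\cdot 0=0$ (a point you rightly flag, and which you should also invoke to justify $M(M^n u)=M^{n+1}u$ in the iteration), the paper's because the first-moment identity holds in $[0,+\infty]$. The paper's route is shorter and more transparent; yours is purely analytic, stays entirely inside the generating-function formalism, and additionally yields the slightly sharper pointwise statement $\mathbf{1}-\bar{\mathbf{q}}\le M^n(\mathbf{1}-\bar{\mathbf{q}})$, which is the standard ``first-moment upper bound on survival'' in disguise. Either is acceptable.
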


\begin{proof}
Consider the sequence $\{\mathbf{q}_n\}_{n \in \N}$ defined as
\[
 \begin{cases}
\mathbf{q}_0:= \mathbf{0}  &\\
\mathbf{q}_{n+1}:=G(\mathbf{q}_n), & \forall n \in \N
 \end{cases}
\]
clearly $\mathbf{q}_{n}(x)$ is the probability that the process, which
starts with one particle at $x$ at time $0$, has no particles at time $n$ .
Moreover $\mathbf{q}_{n}$ converges pointwise to $\bar {\mathbf{q}}$ (that is, with respect to the product topology). 
By the continuity of $G$ we have $\bar {\mathbf{q}}=G(\bar {\mathbf{q}})$.

Now $(1) \Longrightarrow (3) \Longrightarrow (2)$ are trivial. Assume $(2)$;
by induction on $n$ we have that $\mathbf{q}_n \le \mathbf{q}$; indeed $\mathbf{0} \le \mathbf{q}$ and, 
since $G$ is nondecreasing, 
$\mathbf{q}_{n+1}=G(\mathbf{q}_n) \le G(\mathbf{q}) \le \mathbf{q}$. By taking the limit as $n \to +\infty$ we have 
$\bar {\mathbf{q}}\le \mathbf{q}$. This implies $\bar {\mathbf{q}}(x)\le \mathbf{q}(x)<1$; thus $(1)$ is proven.

We are left to prove that, say, $(1)$ implies $\liminf_{n \to +\infty} \sum_{y \in X} m_{xy}^{(n)}>0$.
To this aim consider a realization $\{\eta_n\}_{n \in \N}$ of the BRW and denote by $\mathbb{E}^x$ the expectation with respect to 
 $\pr^{\delta_x}$. If $\pr^{\delta_x}(S)>0$ where $S:=\{\sum_{y \in X} \eta_n(y) >0, \ \forall n \in \N\}$ then, since
 $\sum_{y \in X} \eta_n(y) \ge 1$ on $S$, we have
 \[
  \sum_{y \in X} m^{(n)}_{xy} = \mathbb{E}^x \Big [\sum_{y \in X} \eta_n(y) \Big ] \ge \mathbb{E}^x \Big [\sum_{y \in X} \eta_n(y) \Big | S \Big ] \pr^{\delta_x}(S) 
  \ge  \pr^{\delta_x}(S) >0.
 \]
 This implies $\inf_{n \in \N} \sum_{y \in X} m_{xy}^{(n)}>0$ which is equivalent to
 $\liminf_{n \to +\infty} \sum_{y \in X} m_{xy}^{(n)}>0$ 
(since $\sum_{y \in X} 
 m_{xy}^{(n)}=0$ for some $n$ implies the same equality for all subsequent values of $n$).
 \end{proof}
%
%

Given a BPVE with a sequence $\{\Phi_n\}_{n \in \mathbb{N}}$ of generating functions
we can construct the associated BRW on $\mathbb{N}$ as follows: each particle at $n \in \mathbb{N}$ has a random number of children according
to the $n$th generation law of the BPVE and they are all placed at $n+1$. This is a reducible BRW whose generating function 
$G:[0,1]^\mathbb{N} \to [0,1]^\mathbb{N}$ satisfies
\begin{equation}\label{eq:associated}
 G(\mathbf{q}|n):=\Phi_n(\mathbf{q}(n+1)), \qquad \forall \mathbf{q} \in [0,1]^\mathbb{N}
\end{equation}
(note that 
the same identification holds in general for a BRW in varying environment (that is, time-inhomogeneous BRW) on $X$ and a time-homogeneous
BRW on $X \times \N$).
Applying Theorem~\ref{th:survival} to the BRW associated to the BPVE we have the following characterization of survival for the
BPVE.

\begin{pro}\label{pro:BPVE-BRW}
Consider a BPVE and its sequence $\{\Phi_n\}_{n \in \mathbb{N}}$ of generating functions.
 There is survival for the process if and only if there exists $\mathbf{q} \in  [0,1]^\mathbb{N}$, $n_0 \in \mathbb{N}$ such that 
 $\mathbf{q}(n_0)<1$ and $\Phi_n(\mathbf{q}(n+1)) \le \mathbf{q}(n)$ for all $n \ge n_0$.
\end{pro}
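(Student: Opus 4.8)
The plan is to deduce the statement from Theorem~\ref{th:survival} applied to the BRW on $\N$ associated to the BPVE through the identification~\eqref{eq:associated}. The first thing to observe is that, in this associated BRW, a particle born at generation $n$ is placed at site $n+1$, so at each time $t$ all the mass is concentrated at site $t$; hence $\sum_{w\in\N}\eta_t(w)=Z_t$ and global survival of the BRW starting from site $0$ coincides with survival of the BPVE. With $G$ given by $G(\mathbf{q}|n)=\Phi_n(\mathbf{q}(n+1))$, Theorem~\ref{th:survival} then says that survival is equivalent to the existence of $\mathbf{q}\in[0,1]^\N$ with $\mathbf{q}(0)<1$ and $\Phi_n(\mathbf{q}(n+1))\le\mathbf{q}(n)$ for \emph{all} $n\in\N$. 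The proposition is a more flexible version of this, in which the distinguished coordinate $\mathbf{q}(0)$ is replaced by an arbitrary $\mathbf{q}(n_0)$ and the inequalities are only required from $n_0$ on, so the whole point is to bridge this gap.

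For the direct implication there is essentially nothing to do: if the BPVE survives, I would take $\mathbf{q}=\bar{\mathbf{q}}$, the minimal fixed point of $G$ provided by Theorem~\ref{th:survival}. It satisfies $\bar{\mathbf{q}}(0)<1$ and $\Phi_n(\bar{\mathbf{q}}(n+1))=\bar{\mathbf{q}}(n)$ for every $n$, so the required inequalities hold with $n_0=0$.

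The content is in the converse. Assume $\mathbf{q}$ and $n_0$ are as in the statement. I would extend $\mathbf{q}$ downward to a new vector $\tilde{\mathbf{q}}\in[0,1]^\N$ by keeping $\tilde{\mathbf{q}}(n):=\mathbf{q}(n)$ for $n\ge n_0$ and defining, recursively for $n<n_0$, $\tilde{\mathbf{q}}(n):=\Phi_n(\tilde{\mathbf{q}}(n+1))$. For $n\ge n_0$ one has $G(\tilde{\mathbf{q}}|n)=\Phi_n(\mathbf{q}(n+1))\le\mathbf{q}(n)=\tilde{\mathbf{q}}(n)$ by hypothesis, while for $n<n_0$ the equality $G(\tilde{\mathbf{q}}|n)=\tilde{\mathbf{q}}(n)$ holds by construction; thus $G(\tilde{\mathbf{q}})\le\tilde{\mathbf{q}}$ on all of $\N$. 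It then remains to check that $\tilde{\mathbf{q}}(0)<1$, after which Theorem~\ref{th:survival} (implication $(2)\Rightarrow(1)$) yields survival and finishes the argument.

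The main obstacle, and the only place where the standing hypothesis enters, is precisely this last verification. Starting from $\tilde{\mathbf{q}}(n_0)=\mathbf{q}(n_0)<1$ and descending, I would argue by a downward induction that $\tilde{\mathbf{q}}(n)<1$ for every $n\le n_0$. The induction step needs the elementary fact that $\Phi_n(z)<1$ whenever $z<1$, which follows from the assumption $\Phi_n(0)<1$: writing $1-\Phi_n(z)=\sum_{i\ge1}\rho_n(i)(1-z^i)$, the sum is strictly positive because $\sum_{i\ge1}\rho_n(i)=1-\rho_n(0)>0$ and $1-z^i>0$ for $z<1$. This is exactly the condition guaranteeing that the process can propagate from one generation to the next; without it the BPVE might be unable to reach generation $n_0$ at all, and the converse implication would fail.
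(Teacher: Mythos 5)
Your argument is correct and rests on the same reduction as the paper's: apply Theorem~\ref{th:survival} to the BRW on $\N$ associated to the BPVE via the identification~\eqref{eq:associated}. The only place where you genuinely diverge is in the converse, when bridging the gap between the index $n_0$ and the starting site $0$. The paper extends the given $\mathbf{q}$ by setting $\mathbf{q}(i)=1$ for $i<n_0$; since this forces $\mathbf{q}(0)=1$, Theorem~\ref{th:survival} then only yields survival \emph{starting from generation $n_0$}, and a separate probabilistic step is needed, namely that $\Phi_n(0)<1$ guarantees a positive probability of reaching generation $n_0$, so that survival from generation $n_0$ and from generation $0$ are equivalent. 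You instead extend downward by $\tilde{\mathbf{q}}(n):=\Phi_n(\tilde{\mathbf{q}}(n+1))$ and use the analytic form of the same standing assumption --- $\Phi_n(0)<1$ forces $\Phi_n\bigl([0,1)\bigr)\subseteq[0,1)$ --- to keep $\tilde{\mathbf{q}}(0)<1$, so the theorem applies directly at $x=0$ with no further probabilistic input. The two devices carry identical content: your $\tilde{\mathbf{q}}(0)$ equals $H_{n_0}(\mathbf{q}(n_0))$ in the paper's notation, and the statement $H_{n_0}(z)<1$ for $z<1$ is exactly the fact the paper invokes in probabilistic language; but your version stays entirely inside the fixed-point framework and is self-contained. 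Your closing observation that $\Phi_n(0)<1$ is precisely what makes the converse work is also on target.
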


\begin{proof}
According to Theorem~\ref{th:survival} the associated BRW survives globally if and only if there exists $\mathbf{q} \in  [0,1]^\mathbb{N}$
such that $G(\mathbf{q}) \le \mathbf{q}$ and $\mathbf{q}(n)<1$ for some $n \in \mathbb{N}$ (that is, $\mathbf{q}< \mathbf{1}$). 
By equation~\eqref{eq:associated} the condition is equivalent to  
$\Phi_n(\mathbf{q}(n+1)) \le \mathbf{q}(n)$ for all $n \ge n_0$ and
$\mathbf{q}(n_0)<1$ for some $n_0$; 
indeed we can always define $\mathbf{q}(i)=1$ for all $i=0,1, \ldots, n_0-1$ and we have $\Phi_n(\mathbf{q}(n+1)) \le \mathbf{q}(n)$ for all $n \in \mathbb{N}$.
This implies survival starting from the $n_0$th generation.

However, since $\Phi_n(0)<1$ for all $n \in \mathbb{N}$,  there is a positive probability for the BPVE to survive up to the 
$n_0$th generation (for every fixed $n_0 \in \mathbb{N}$). 
Thus, 
there is survival starting from the $0$th generation 
if and only if there is survival 
starting from the $n_0$th generation. 
\end{proof}

\subsection{Main results}\label{subsec:mainBPVE}

We consider a BPVE and its sequence $\{\Phi_n\}_{n \in \mathbb{N}}$ of generating functions. 
We denote by 
 $m_n$  the first moment $\E[W_n]=\Phi_n^\prime(1)$ of the reproduction law of the $n$th generation.
The first results is a sufficient condition for extinction (see
Example~\ref{exmp:less1surv} for an application). 
To avoid trivial situations we assume henceforth that $m_n>0$ for all $n \in \N$.
\begin{pro}\label{th:main0ext}
 If $\inf_{n \in \N} \prod_{i=0}^n  m_i =0$, then the BPVE dies out.  
\end{pro}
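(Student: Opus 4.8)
The plan is to exploit the moment criterion recorded at the end of Theorem~\ref{th:survival} together with the BPVE--BRW dictionary of Proposition~\ref{pro:BPVE-BRW}, arguing by contraposition: I will show that survival forces $\liminf_n \prod_{i=0}^{n} m_i > 0$, so that the hypothesis $\inf_n \prod_{i=0}^n m_i = 0$ rules out survival.

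First I would set up the associated (reducible) BRW on $\N$ given by \eqref{eq:associated}, started from the single site $0$. Since a particle sitting at $n$ can only place children at $n+1$, the one-step first moments are $m_{n,n+1} = m_n$ and $m_{n,y} = 0$ for $y \neq n+1$. Consequently the $n$-step first moments from the origin are supported on the single site $n$, and a trivial induction gives $m_{0,y}^{(n)} = \delta_{y,n} \prod_{i=0}^{n-1} m_i$, whence $\sum_{y \in \N} m_{0,y}^{(n)} = \prod_{i=0}^{n-1} m_i$. This computation is the only real content; everything else is bookkeeping.

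Next I would invoke the last assertion of Theorem~\ref{th:survival}: global survival of the BRW from $0$ implies $\liminf_{n\to\infty} \sum_{y} m_{0,y}^{(n)} = \liminf_n \prod_{i=0}^{n-1} m_i > 0$. It remains to reconcile the two index ranges. Because $m_i > 0$ for every $i$, the set $\{\prod_{i=0}^{n-1} m_i : n \ge 0\}$ equals $\{1\} \cup \{\prod_{i=0}^n m_i : n \ge 0\}$, so the hypothesis $\inf_n \prod_{i=0}^n m_i = 0$ yields $\inf_n \prod_{i=0}^{n-1} m_i = 0$ as well; and since the partial products are nonnegative, an infimum equal to $0$ forces the corresponding $\liminf$ to be $0$. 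This contradicts the displayed lower bound, so the associated BRW cannot survive globally, and by Proposition~\ref{pro:BPVE-BRW} the BPVE dies out.

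For completeness I would note that a self-contained route avoids Theorem~\ref{th:survival} entirely: conditioning on $Z_n$ gives $\E[Z_{n+1}] = m_n \E[Z_n]$, hence $\E[Z_n] = \prod_{i=0}^{n-1} m_i$, and Markov's inequality yields $\pr(\text{survival}) \le \pr(Z_n \ge 1) \le \E[Z_n]$ for every $n$; taking the infimum over $n$ and invoking the hypothesis gives $\pr(\text{survival}) = 0$. The only point requiring a moment of care in either route is the harmless shift between $\prod_{i=0}^{n-1}$ and $\prod_{i=0}^{n}$, which I would dispatch exactly as above; I do not expect any genuine obstacle beyond this.
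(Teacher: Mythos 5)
Your proposal is correct, and its main argument is precisely the paper's: pass to the associated BRW on $\N$, compute $\sum_{y} m_{0,y}^{(n)}=\prod_{i=0}^{n-1}m_i$, and invoke the final assertion of Theorem~\ref{th:survival} that global survival forces $\liminf_{n}\sum_{y}m_{0,y}^{(n)}>0$. The only differences are minor: your elementary Markov-inequality fallback replaces the paper's second derivation (which instead uses Proposition~\ref{pro:BPVE-BRW} and the convexity bound $\Phi_n(z)\ge 1-m_n+m_n z$), and your step ``infimum $0$ forces $\liminf$ $0$'' relies on the strict positivity of the partial products (guaranteed since $m_i>0$), not merely their nonnegativity.
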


\begin{proof}
Since
$m_n>0$ for all $n \in \N$ then $\inf_{n \in \N} \prod_{i=0}^n  m_i =0$ if and only if
$\liminf_{n \to +\infty} \prod_{i=0}^n  m_i =0$.
Extinction follows from Theorem~\ref{th:survival}, since the expected number of children
in $n$ steps is $\prod_{i=0}^{n} m_i$.
We show that the thesis can also be derived by Proposition~\ref{pro:BPVE-BRW}.
Indeed, suppose by contradiction that there is survival. Thus,
there exists $\mathbf{q} \in  [0,1]^\mathbb{N}$, $n_0 \in \mathbb{N}$ such that 
 $\mathbf{q}(n_0)<1$ and $\Phi_n(\mathbf{q}(n+1)) \le \mathbf{q}(n)$ for all $n \ge n_0$.
 By convexity, since $\Phi^\prime_n(1)=m_n$, we have 
 \[
\Phi_n(z)
  \ge 1-m_n +m_n z, \qquad \forall z \in [0,1], 
 \]
 hence $1-\mathbf{q}(n) \le m_n (1-\mathbf{q}(n+1))$ for all $n \ge n_0$.
 Thus, by induction on $j$,
 \[
  0<1-\mathbf{q}(n_0) \le (1- \mathbf{q}(n_0+j+1)) \prod_{i=n_0}^{n_0+j} m_i \le   
\prod_{i=n_0}^{n_0+j} m_i, 
  \]
  for all $j \ge 0$.
  Since $\prod_{i=0}^{n_0-1} m_i >0$ then we have $\inf_{n \in \N} \prod_{i=0}^{n} m_i>0$.
\end{proof}
Observe that a sufficient condition for extinction is 
$\limsup_{n \to +\infty} m_n <1$: Proposition~\ref{th:main0ext} applies and the BPVE dies out.

We denote now by 
 $m_n^{(2)}$  the  second moment $\E[W_n^2]$ of the reproduction law of the $n$th generation;
 henceforth we suppose that this moment is finite for every sufficiently large $n$.
 Note that 
 $m_n^{(2)}=\Phi_n^{\prime\prime}(1)+m_n$.
Theorem~\ref{th:main0} and Corollary~\ref{cor:main0} provide sufficient conditions for survival
(see Example~\ref{exmp:largem_nextinction} for an application). 

\begin{teo}\label{th:main0}
Consider a BPVE such that $m^{(2)}_n<+\infty$ for every sufficiently large $n$.
Then, for every $n \in \N$, the following statements are equivalent:
\begin{enumerate}
 \item 
 \[
 \begin{cases}
  \sum_{j=n}^{+\infty} \frac{m^{(2)}_j-m_j}{m_j} \Big ( \prod_{i=n}^j m_i \Big )^{-1} <+\infty\\
  \inf_{j \in \N} \prod_{i=0}^j m_i >0\\
  \end{cases}
 \]
 \item
 \[
 \lim_{k \to +\infty} \Big [\Big ( \prod_{i=n}^{n+k} m_i \Big )^{-1} +   \sum_{j=n}^{n+k} \frac{m^{(2)}_j-m_j}{m_j} \Big ( \prod_{i=n}^j m_i \Big )^{-1} \Big ] <+\infty.
 \]
\end{enumerate}
Moreover, if one of these conditions holds for some $n$ then the BPVE survives. 
\end{teo}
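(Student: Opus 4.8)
The plan is to treat the two assertions separately: first the equivalence $(1)\Leftrightarrow(2)$, which is a purely deterministic statement about the sequences $\{m_j\}$ and $\{m_j^{(2)}\}$, and then the survival claim, which I will reduce to producing an explicit $\mathbf{q}$ satisfying the hypothesis of Proposition~\ref{pro:BPVE-BRW}. Throughout I write $\pi_j:=\prod_{i=n}^j m_i$ (with $\pi_{n-1}:=1$) and $c_j:=\frac{m_j^{(2)}-m_j}{m_j}\,\pi_j^{-1}$, so that the series in $(1)$ is $\sum_{j\ge n}c_j$ and the bracket in $(2)$ is $S_k:=\pi_{n+k}^{-1}+\sum_{j=n}^{n+k}c_j$. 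Two elementary facts will be used repeatedly: $m_j^{(2)}-m_j=\Phi_j''(1)\ge 0$, and, since the nonnegativity of the variance gives $m_j^{(2)}\ge m_j^2$, also $m_j^{(2)}-m_j\ge m_j(m_j-1)$.

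For the equivalence, the key observation is that $S_k$ is nondecreasing in $k$. Indeed, $\pi_{n+k}^{-1}-\pi_{n+k-1}^{-1}=\frac{1-m_{n+k}}{m_{n+k}}\,\pi_{n+k-1}^{-1}$, while $m_{n+k}^{(2)}-m_{n+k}\ge m_{n+k}(m_{n+k}-1)$ yields $c_{n+k}\ge \frac{m_{n+k}-1}{m_{n+k}}\,\pi_{n+k-1}^{-1}$; adding these gives $S_k-S_{k-1}\ge 0$. Hence $\lim_k S_k$ always exists in $[0,+\infty]$, and $(2)$ merely asserts it is finite. Since both $\pi_{n+k}^{-1}$ and $\sum_{j=n}^{n+k}c_j$ are nonnegative and the latter is nondecreasing, $\lim_k S_k<+\infty$ forces simultaneously $\sum_{j\ge n}c_j<+\infty$ and $\sup_k \pi_{n+k}^{-1}<+\infty$; the latter is equivalent to $\inf_{j}\pi_j>0$, which (after multiplying by the fixed positive constant $\prod_{i=0}^{n-1}m_i$ and treating the finitely many indices $j<n$ trivially) is exactly $\inf_j\prod_{i=0}^j m_i>0$. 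This gives $(2)\Rightarrow(1)$; the converse is immediate, because under $(1)$ the monotone sequence $S_k$ is bounded by $\sup_k\pi_{n+k}^{-1}+\sum_{j\ge n}c_j<+\infty$.

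For survival I will produce, under $(1)$, a vector $\mathbf{q}$ and index $n_0=n$ as required by Proposition~\ref{pro:BPVE-BRW}. Writing $\mathbf{q}(j)=1-a_j$ for $j\ge n$ (and $\mathbf{q}(j)=1$ for $j<n$), the condition $\Phi_j(\mathbf{q}(j+1))\le\mathbf{q}(j)$ reads $1-\Phi_j(1-a_{j+1})\ge a_j$. Taylor's formula together with the monotonicity of $\Phi_j''$ on $[0,1]$ (so that $\Phi_j''(\xi)\le\Phi_j''(1)=m_j^{(2)}-m_j$) gives $1-\Phi_j(1-a_{j+1})\ge m_j a_{j+1}-\tfrac12(m_j^{(2)}-m_j)a_{j+1}^2$, so it suffices to enforce
\[
m_j a_{j+1}-\tfrac12(m_j^{(2)}-m_j)a_{j+1}^2\ge a_j,\qquad j\ge n.
\]
The natural substitution is $a_j:=v_j/\pi_{j-1}$, which (using $m_j/\pi_j=1/\pi_{j-1}$ and $\pi_{j-1}/\pi_j^2=1/(m_j\pi_j)$) converts the display into the much cleaner recursion $v_{j+1}-v_j\ge\tfrac12 c_j v_{j+1}^2$.

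It then remains to exhibit such a $\{v_j\}$ that is positive, nondecreasing, and small enough that $a_j\in[0,1]$. Setting $C:=\sum_{j\ge n}c_j<+\infty$ and $\delta:=\inf_{i\ge n-1}\pi_i>0$ (finite and positive, respectively, by $(1)$), I will fix a constant $V\le\min(\delta,1/C)$ and define $v_j:=V-\tfrac12 V^2\sum_{i\ge j}c_i$. By construction $v_{j+1}-v_j=\tfrac12 V^2 c_j\ge\tfrac12 c_j v_{j+1}^2$, since $v_{j+1}\le V$; moreover $v_j\uparrow V$, $v_n=V(1-\tfrac12 VC)>0$, and $0<v_j\le V\le\delta\le\pi_{j-1}$, so $a_j\in(0,1]$ with $a_n>0$, i.e.\ $\mathbf{q}(n)<1$. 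Proposition~\ref{pro:BPVE-BRW} then yields survival. I expect the main obstacle to be exactly this construction: identifying the change of variables $a_j=v_j/\pi_{j-1}$ that linearizes the leading order, and then controlling the quadratic remainder $\tfrac12 c_j v_{j+1}^2$ uniformly so that $\{v_j\}$ stays below $\delta$ while remaining positive — which is precisely where the finiteness of $\sum_{j}c_j$ and the boundedness of $\pi_j^{-1}$, the two ingredients of $(1)$, both enter.
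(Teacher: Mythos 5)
Your proof is correct, and while the equivalence $(1)\Leftrightarrow(2)$ follows the paper essentially verbatim (your $S_k$ is the paper's $\beta_{n,n+k}$, and your monotonicity computation via $m_j^{(2)}\ge m_j^2$ is the same one), the survival argument is genuinely different. The paper invokes Agresti's fractional-linear majorant $\Phi_j(x)\le f_j(x)=1-1/\xi_j(1/(1-x))$ with $\xi_j$ affine; the point of that choice is that composing the affine maps $\xi_j$ telescopes exactly into $\beta_{n,l}$, so the subsolution $\mathbf q(n)=1-1/b_n$ with $b_n=\lim_l\beta_{n,l}$ appears as an honest fixed point of the majorizing dynamics, which also explains where condition $(2)$ comes from. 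You instead use the elementary second-order bound $\Phi_j(1-a)\le 1-m_ja+\tfrac12(m_j^{(2)}-m_j)a^2$ (correctly justified by the monotonicity of $\Phi_j''$, which makes the Lagrange remainder controllable even though $\Phi_j''(1)$ is only a left limit), linearize the leading term by the substitution $a_j=v_j/\pi_{j-1}$, and solve the resulting recursion $v_{j+1}-v_j\ge\tfrac12 c_jv_{j+1}^2$ explicitly with tail sums of the $c_j$; this yields a subsolution rather than a fixed point, but Proposition~\ref{pro:BPVE-BRW} only requires a subsolution. Your route is self-contained (no appeal to an external bound) at the cost of some bookkeeping with the constants $V\le\min(\delta,1/C)$; the paper's route is shorter once Agresti's bound is granted and makes the appearance of $\beta_{n,l}$ structurally transparent. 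Two small points to record for completeness: in the degenerate case $C=\sum_{j\ge n}c_j=0$ read $1/C$ as $+\infty$ and take $V=\delta$, and note that condition $(1)$ holding for a given $n$ already forces $m_j^{(2)}<+\infty$ for all $j\ge n$, so the quadratic bound is available exactly where you use it.
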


%
%
Note that if $(1)$ (resp.~$(2)$) holds for some $n=n_0$ than it holds for every $n \ge n_0$.

\begin{proof}
The idea is to construct a solution $\mathbf{q}$ as in Proposition~\ref{pro:BPVE-BRW}. 
To this aim, we make use of an upper bound due to \cite{cf:Agresti74}: 
$\Phi_n(x) \le f_n(x)$ for all $x \in [0,1]$ where $f_n(x):=1-b_n/(1-c_n)+b_nx/(1-c_nx)$ with
$b_n:=m_n^3/(m^{(2)}_n)^2$ and $c_n:=(m_n^{(2)}-m_n)/m_n^{(2)}$. In particular,
by defining $1/0:=+\infty$ and $1/+\infty:=0$, we have
\[
 f_n(x)=1-\frac{1}{\xi_n(1/(1-x))}
\]
for all $x \in [0,1]$ where
\[
 \xi_n(s):=
 \begin{cases}
s/m_n+  (m_n^{(2)}-m_n)/m_n^2 & s \in \mathbb{R}\\
+\infty & x=+\infty.
 \end{cases}
\]
Define $S_{n,l}:= \sum_{j=n}^{l} \frac{m^{(2)}_j-m_j}{m_j} \big ( \prod_{i=n}^j m_i \big )^{-1}$ and 
$\beta_{n,l}:=S_{n,l}+\big( \prod_{i=n}^{l} m_i \big )^{-1}$ (for all $0 \le n \le l$). Note that
$S_{n,l}$ is nondecreasing with respect to $l$: indeed,
$W_j$ is an integer-valued 
random variable, whence $m^{(2)}_j \ge m_j$. Now we prove that also $l \mapsto \beta_{n,l}$ is nondecreasing.
Indeed it is trivial to show that $\beta_{n,l+1}-\beta_{n,l}=(m^{(2)}_{l+1}/m^{2}_{l+1}-1) ( \prod_{i=n}^l m_i \big )^{-1} \ge 0$.
Observe that, for every $n \in \N$,  
$\inf_{j \ge n} \prod_{i=n}^j  m_i >0$ if and only if $\inf_{j \in \N} \prod_{i=0}^j m_i >0$; moreover,
$\prod_{i=0}^j m_i$ and $\prod_{i=n}^j m_i$ have the same behaviour as $j \to +\infty$.

$(1) \Longrightarrow (2)$. Indeed if $S_{n,n+k}$ converges as $k \to +\infty$ and 
$\beta_{n,n+k} - S_{n,n+k}$ is bounded from above with respect to $k$, then $\beta_{n,n+k}$ is bounded from above with respect to $k$, thus
the convergence follows from the monotonicity.

$(2) \Longrightarrow (1)$. Clearly $S_{n,n+k} \le \beta_{n,n+k}$; if $\beta_{n,n+k}$ converges as $k \to +\infty$, 
since $S_{n,n+k}$ is non decreasing then it converges, thus $\beta_{n,n+k}-S_{n,n+k}$ is bounded from above with respect to $k$.

$(2) \Longrightarrow$ survival. Denote $\lim_{k \to +\infty} \beta_{n,n+k}$ by $b_n$.
Now we prove that: (a)
$\mathbf{q}(n):=1-1/b_n \in [0,1]$ for all $n \in \N$, (b) $\mathbf{q}(n)<1$ for some $n$ and (c) $\mathbf{q}$ is a solution of 
$\Phi_n(\mathbf{q}(n+1)) \le \mathbf{q}(n)$.
Thus Proposition~\ref{pro:BPVE-BRW} applies.

Clearly $b_n \ge \beta_{n,n} = 1/m_n + (m_n^{(2)}-m_n)/m_n^2=m_n^{(2)}/m_n^2 \ge 1$, whence $\mathbf{q}(n) \in [0,1]$ and (a) is proved. 
Moreover $(2)$ implies $1-1/b_n<1$, that is, (b). 
To prove (c) it suffices to show that
$\mathbf{q}(n)=f_n(\mathbf{q}(n+1)) \ge \Phi_n(\mathbf{q}(n+1))$. To this aim
we show that $\xi_n(b_{n+1})=b_n$. Indeed, by using the continuity 
of $\xi_n$,
\[
\begin{split}
 \xi_n(b_{n+1})&= \lim_{k \to +\infty} (\beta_{n+1,n+k+1}/m_n+ (m_n^{(2)}-m_n)/m_n^2)\\
 &= 
 \lim_{k \to +\infty} \Big [ \Big ( \prod_{i=n}^{n+k+1} m_i \Big )^{-1} +   \sum_{j=n+1}^{n+k+1} \frac{m^{(2)}_j-m_j}{m_j} \Big ( \prod_{i=n}^j m_i \Big )^{-1} 
 + (m_n^{(2)}-m_n)/m_n^2) \Big ]\\
 &=\lim_{k \to +\infty} \beta_{n,n+k+1}=b_n.\\
 \end{split}
\]
\end{proof}

The conditions in Theorem~\ref{th:main0} are implied by other conditions which are easier to check, as the
following corollary shows.

\begin{cor}\label{cor:main0}
 Consider a BPVE such that  $m^{(2)}_n<+\infty$ for every sufficiently large $n$.
 If one of the following holds:
 \begin{enumerate}
  \item $\sum_{j=n}^{+\infty} \frac{m^{(2)}_j}{m_j} \Big ( \prod_{i=n}^j m_i \Big )^{-1} <+\infty$ for some $n \in \N$;
  \item $\limsup_{n \to +\infty} \sqrt[n]{m^{(2)}_n/{m^2_n}} < \liminf_{n \to +\infty} \sqrt[n]{\prod_{i=0}^{n-1} m_i }$;
  \item there exists a function $g:\N\to [1,+\infty)$ such that $m^{(2)}_n/{m^2_n} \le g(n)$ for every sufficiently large $n$ and 
  $\limsup_{n \to +\infty} g(n+1)/g(n) < \liminf_{n \to +\infty} \sqrt[n]{\prod_{i=0}^{n-1} m_i }$;
  \item $\lim_{n \to +\infty} m_n=+\infty $ and there exists $M,k \ge 1$ such that $m_n^{(2)}/m_n^2 \le k M^n$ for all sufficiently large $n\in \mathbb{N}$;
  \end{enumerate}
then the BPVE survives. 
\end{cor}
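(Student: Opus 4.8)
The plan is to establish the chain of implications $(4)\Rightarrow(3)\Rightarrow(2)\Rightarrow(1)$ and then to show that $(1)$ already forces both requirements in part~$(1)$ of Theorem~\ref{th:main0}, after which survival is precisely the conclusion of that theorem. Two elementary facts will be used throughout: first $m_j^{(2)}\ge m_j^2$ (Jensen, since $m_j^{(2)}=\E[W_j^2]\ge(\E[W_j])^2=m_j^2$), which gives $m_j^{(2)}/m_j^2\ge1$; and second $m_j>0$. Fixing $n$, the quantity $\prod_{i=0}^{n-1}m_i$ will enter only as a harmless positive constant.

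First I would treat $(1)\Rightarrow$ Theorem~\ref{th:main0}$(1)$. The summability requirement is immediate, since $m_j>0$ gives $(m_j^{(2)}-m_j)/m_j\le m_j^{(2)}/m_j$, so the series of Theorem~\ref{th:main0}$(1)$ is dominated termwise by the convergent series in $(1)$. For the second requirement $\inf_j\prod_{i=0}^j m_i>0$, I would bound each summand from below using $m_j^{(2)}/m_j\ge m_j$:
\[
\frac{m_j^{(2)}}{m_j}\Big(\prod_{i=n}^j m_i\Big)^{-1}\ge m_j\Big(\prod_{i=n}^j m_i\Big)^{-1}=\Big(\prod_{i=n}^{j-1}m_i\Big)^{-1}.
\]
Convergence of the series forces its general term to $0$, hence $\prod_{i=n}^{j-1}m_i\to+\infty$; since every partial product is strictly positive, the infimum over all $j$ is positive.

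Next comes the chain. The key step is the algebraic identity, valid for $j\ge n$,
\[
\frac{m_j^{(2)}}{m_j}\Big(\prod_{i=n}^j m_i\Big)^{-1}=\Big(\prod_{i=0}^{n-1}m_i\Big)\cdot\frac{m_j^{(2)}}{m_j^2}\cdot\Big(\prod_{i=0}^{j-1}m_i\Big)^{-1},
\]
which isolates the factor $m_j^{(2)}/m_j^2$ from the running product. Taking $j$th roots, $\limsup_j(\text{summand})^{1/j}\le a/b$, where $a:=\limsup_j(m_j^{(2)}/m_j^2)^{1/j}$ and $b:=\liminf_j(\prod_{i=0}^{j-1}m_i)^{1/j}$ (the constant prefactor contributes $1$ in the limit). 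Under $(2)$ one has $a<b$ with $a\ge1$, so $a/b<1$ and the root test yields $(1)$; this is $(2)\Rightarrow(1)$. For $(3)\Rightarrow(2)$ I would invoke the standard domination of the root test by the ratio test: from $m_n^{(2)}/m_n^2\le g(n)$ we get $\limsup_n(m_n^{(2)}/m_n^2)^{1/n}\le\limsup_n g(n)^{1/n}\le\limsup_n g(n+1)/g(n)$, which by hypothesis is strictly below $\liminf_n(\prod_{i=0}^{n-1}m_i)^{1/n}$, giving exactly $(2)$. Finally, for $(4)\Rightarrow(3)$ I would set $g(n):=kM^n\in[1,+\infty)$, so $g(n+1)/g(n)=M$, and use that $m_n\to+\infty$ makes the geometric means $\big(\prod_{i=0}^{n-1}m_i\big)^{1/n}=\exp\big(\tfrac1n\sum_{i=0}^{n-1}\log m_i\big)$ tend to $+\infty$ by Ces\`aro; hence the right-hand side in $(3)$ equals $+\infty>M$.

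The only mildly delicate point is the rearrangement of the summand that separates $m_j^{(2)}/m_j^2$ from the partial product so that the root test applies cleanly; once that identity is in hand, the remaining steps reduce to the textbook inequality relating $\limsup$ of $n$th roots to $\limsup$ of consecutive ratios, and to the Ces\`aro behaviour of the geometric mean. I anticipate no further obstacle.
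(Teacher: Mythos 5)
Your proposal is correct and follows essentially the same route as the paper: the chain $(4)\Rightarrow(3)\Rightarrow(2)\Rightarrow(1)\Rightarrow$ survival via Theorem~\ref{th:main0}, with $(2)\Rightarrow(1)$ by the root test, $(3)\Rightarrow(2)$ by the ratio-versus-root comparison (which the paper proves by hand via its $K_\varepsilon$ construction), and $(4)\Rightarrow(3)$ by taking $g(n)=kM^n$. The only cosmetic difference is that in $(1)\Rightarrow$ survival you lower-bound the summand via $m_j^{(2)}/m_j\ge m_j$ (Jensen) where the paper uses $m_j^{(2)}/m_j\ge 1$; both yield $\inf_j\prod_{i=0}^j m_i>0$.
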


\begin{proof} 
It is enough to prove that $(4) \Longrightarrow (3) \Longrightarrow (2) \Longrightarrow (1) \Longrightarrow$ survival.

\noindent
 $(1) \Longrightarrow$ survival. Since ${m^{(2)}_j}/{m_j} \ge 1$ then $\sum_{j=n}^{+\infty} \frac{m^{(2)}_j}{m_j} \big ( \prod_{i=n}^{j} m_i \big )^{-1} <+\infty$ implies both
 $\lim_{n \to +\infty} \prod_{i=0}^n m_i =+\infty$ and $\sum_{j=n}^{+\infty} \frac{(m^{(2)}_j-m_j)}{m_j} \big ( \prod_{i=n}^j m_i \big )^{-1} <+\infty$
 whence condition $(1)$ of Theorem~\ref{th:main0} holds and the survival follows. 

\noindent
 $(2) \Longrightarrow (1)$. It follow easily from Cauchy's Root Test. 

\noindent
 $(3) \Longrightarrow (2)$. We observe that since $g:\N\to [1,+\infty)$ then $\limsup_{n \to +\infty} g(n+1)/g(n) \ge 1$. 
 For every $\varepsilon >0 $ define $K_\varepsilon:=\sup_{n} \big ( \prod_{i=0}^{n-1} g(i+1)/g(i) \big ) / 
(\limsup_{n \to +\infty} g(n+1)/g(n)+\varepsilon)^n <+\infty$; then
\[
 m^{(2)}_n/{m^2_n} \le g(n)= g(0) \prod_{i=0}^{n-1} g(i+1)/g(i) \le g(0) K_\varepsilon (\limsup_{n \to +\infty} g(n+1)/g(n)+\varepsilon)^n
\]
 which implies $\limsup_{n \to +\infty} \sqrt[n]{m^{(2)}_n/{m^2_n}} \le \limsup_{n \to +\infty} g(n+1)/g(n) +\varepsilon$. This
 can be done for every $\varepsilon >0$, hence
 $$\limsup_{n \to +\infty} \sqrt[n]{m^{(2)}_n/{m^2_n}}\le \limsup_{n \to +\infty} g(n+1)/g(n)<\liminf_{n \to +\infty} \sqrt[n]{\prod_{i=0}^{n-1} m_i}.$$

\noindent
 $(4) \Longrightarrow (3)$. 
 It is enough to choose $g(n):=k M^n$.

\end{proof}

%


In the following example we consider some relevant laws for $W_n$ which satisfy the sufficient conditions
of Theorem~\ref{th:main0}.

\begin{exmp}\label{exm:continuous}
Consider the following geometric reproduction laws $\rho_n(i)= m_n^i/(1+m_n)^{i+1}$.
 This family of laws is particularly relevant since they represent the total number of children of a particle in a continuous-time branching process
 with breeding rate $m_n$ and death rate $1$. If $\sum_{n=0}^{+\infty} (\prod_{j=0}^n m_j)^{-1}<+\infty$ then the BPVE survives.
 Indeed, the generating function is $\Phi_n(z):=1/(1+m_n(1-z))$, whence 
 the average number of children is $\frac{\diff}{\diff z}\Phi_n(z)|_{z=1}=m_n$ and $m^{(2)}_n-m_n=\frac{\diff^2}{\diff z}\Phi_n(z)|_{z=1}=2m_n^2$ which
 implies $(m^{(2)}_n-m_n)/m_n^2 =2$ for all $n$. The result follows from Theorem~\ref{th:main0}.
 A partial converse holds: if $\inf_{n \in \N} m_n >0$ and $\sum_{ n=0}^{+\infty} (\prod_{j=0}^n m_j)^{-1}=+\infty$ then the BPVE goes extinct (see
 \cite[Theorem 2]{cf:Agresti75}).
 
  Besides geometric laws, other examples are: 
  Poisson laws $W_n \sim \mathcal{P}(m_n)$ where $\sum_{n=0}^{+\infty} (\prod_{j=0}^n m_j)^{-1}<+\infty$ and 
 binomial laws $W_n \sim \mathcal{B}(k_n, r_n)$
 where $\sum_{n=0}^{+\infty} (\prod_{j=0}^n k_j r_j)^{-1}<+\infty$ (remember that $m_n=k_n r_n$).
 As before, $\inf_{n \in \N} m_n >0$ and $\sum_{ n=0}^{+\infty} (\prod_{j=0}^n m_j)^{-1}=+\infty$ imply extinction.
\end{exmp}

The following two examples show that a BPVE can survive even if $m_n<1$ for all $n$, while it can die out
whatever the sequence $\{m_n\}_{n \in \N}$ (even if $\inf_{n \in \N} m_n > 1$).

\begin{exmp}\label{exmp:less1surv}
Let us consider a sequence $\{a_n\}_{n \in \N}$ such that $a_n \in (0,1)$ for all $n$. Define 
$W_n$ as a Bernoulli variable with parameter $1-a_n$. Clearly $m_n^{(2)}=m_n=1-a_n <1$ for all $n$: 
the corresponding BPVE survives with positive probability if and only if  and $\sum_{n \in \N} a_n <+\infty$.

It is well known that, since $a_n <1$ for all $n \in \N$ then $\sum_{n \in \N} a_n <+\infty$ if and only if $\prod_{n \in \N} (1-a_n)>0$.
If $\sum_{n \in \N} a_n =+\infty$ then $\prod_{n \in \N} m_n = \prod_{n \in \N} (1-a_n)=0$ and
the BPVE dies out according to Proposition~\ref{th:main0ext}.
Survival would follow analogously when $\sum_{n \in \N} a_n <+\infty$ 
by applying Theorem~\ref{th:main0}; nevertheless a direct short proof is possible. 

Indeed, consider the following relation which
 holds for every sequence of events $\{A_i\}_{i \in \N}$: 
\begin{equation}\label{eq:conditional}
 \pr \big (\bigcap_{i=0}^{+\infty} A_i \big )>0 \Longleftrightarrow 
 \begin{cases}
\pr \big (A_i^c|\bigcap_{j=0}^{i-1} A_j \big )<1, \quad \forall i \ge 0\\
\\
\sum_{i=0}^{+\infty} \pr \big (A_i^c|\bigcap_{j=0}^{i-1} A_j \big ) <+\infty
 \end{cases}
 \end{equation}
where $\pr \big (A_0^c|\bigcap_{j=0}^{-1} A_j \big ):=\pr(A_0^c)$.
 Denote by $A_n$ the event ``the BPVE survives up to time $n$''. Hence $\pr \big (A_n^c|\bigcap_{j=1}^{n-1} A_j \big )=a_{n-1}$ (for all $n \ge 1$),
$\bigcap_{i=1}^{+\infty} A_i$ is the event of survival
and the result follows from equation~\eqref{eq:conditional}.
\end{exmp}

\begin{exmp}\label{exmp:largem_nextinction}
 Consider a nonnegative sequence $\{m_n\}_{n \in \N}$ (note that even $m_n \to +\infty$ will do). Define
 $W_n$ by 
 \[
  \pr(W_n=i)=
  \begin{cases}
m_n/k_n & \textrm{if } i=k_n\\
1-m_n/k_n & \textrm{if } i=0\\
  \end{cases}
 \]
where the sequence $\{k_n\}_{n \in \N}$ of integers satisfies 
\[
 \sum_{n \in \N} (1-m_n/k_n)^{\prod_{i=0}^{n-1} k_i}=+\infty.
\]
Note that $m_n=\E[W_n]$. We show recursively that such a sequence $\{k_n\}_{n \in \N}$ exists and we claim that the corresponding BPVE
dies out almost surely.

Indeed, consider any sequence $\{a_n\}_{n \in \N}$ such that $a_n \in (0,1)$ and $\sum_{n\in \N}a_n =+\infty$
(take for instance $a_n:=\varepsilon>0$ for all $n$). The idea is to find $\{k_n\}_{n \in \N}$ in such a way that 
$(1-m_n/k_n)^{\prod_{i=0}^{n-1} k_i} \ge a_n$.
Fix $k_0 \in \N$ such that $1-m_0/k_0 \ge a_0$. Suppose we already defined $k_i$
for all $i \le n-1$; since $(1-m_n/x)^{\prod_{i=0}^{n-1} k_i} \to 1$ as $x \to +\infty$, there exists
$k_n \in \N$ such that $(1-m_n/k_n)^{\prod_{i=0}^{n-1} k_i} \ge a_n$.

Now denote as before by $A_n$ the event ``the BPVE survives up to time $n$''. Since the maximum number of individuals alive
at time $n$ is $\prod_{i=0}^{n-1} k_i$ we have
$\pr \big (A_n^c|\bigcap_{j=1}^{n-1} A_j \big ) \ge (1-m_{n-1}/k_{n-1})^{\prod_{i=0}^{n-2} k_i} \ge a_{n-1}$ for all $n \ge 1$ (where
$\prod_{i=0}^{-1} k_i:=1$). 
 The result follows again from equation~\eqref{eq:conditional}.
 
 For an explicit example, take $m_n:=2$ for all $n$, $k_0>2$ and $k_n := k_0^{2^{n-1}}$ for all $n \ge 1$. Clearly
 $\prod_{i=o}^{n-1} k_i = k_0^{2^{n-1}} =k_n$ hence $0<(1-m_n/k_n)^{\prod_{i=0}^{n-1} k_i}=(1-2/k_n)^{k_n} \to e^{-2}$ which implies
 $\min_n (1-m_n/k_n)^{\prod_{i=0}^{n-1} k_i} >0$; thus $\sum_{n \in \N} (1-m_n/k_n)^{\prod_{i=0}^{n-1} k_i}=+\infty$.
\end{exmp}

We compare our results with other conditions found in the literature (see for instance \cite{cf:Agresti75, cf:BM08, cf:Bigg}).

 Theorem~\ref{th:main0} extends \cite[Theorem 2]{cf:Agresti75}.
 In \cite[Theorem 2]{cf:Agresti75} the author gives a characterization of survival under the condition
$\sup_{j \in \N}(m^{(2)}_j-m_j)/m_n<+\infty$ while we do not need such an inequality to be satisfied.
As an example, consider $\{W_n\}_{n\in \N}$ such that 
\[
 \pr(W_n=i)=
 \begin{cases}
2/\alpha_n & i=\alpha_n\\
1-2/\alpha_n & i=0\\
 \end{cases}
\]
where $\alpha_n \in \N$, $\lim_{n \to +\infty} \alpha_n=+\infty$ and $\sum_{n \in \N} \alpha_n/2^n<+\infty$.
Clearly $m_n=2$ and $m_n^{(2)}=2 \alpha_n$ which implies $\sup_{j \in \N}(m^{(2)}_j-m_j)/m_n 
=+\infty$.
Nevertheless $\sum_{j=0}^{+\infty} \frac{m^{(2)}_j}{m_j} \Big ( \prod_{i=0}^j m_i \Big )^{-1}=\sum_{j=0}^{+\infty} \alpha_j/2^{j+1}<+\infty$
and Corollary~\ref{cor:main0} applies.


Our results imply \cite[Proposition 1.1]{cf:BM08}. Indeed the sufficient condition for extinction in \cite[Proposition 1.1]{cf:BM08}
is a consequence of Proposition~\ref{th:main0ext}. On the other hand, the sufficient condition for survival in \cite[Proposition 1.1]{cf:BM08} follows from
Corollary~\ref{cor:main0}(3) by taking $g(n):=\sup_{j \in \N} m^{(2)}_j / \inf_{j \in \N} m^2_j >0$.

Another sufficient condition for survival of a BPVE, given by \cite[Theorem 1]{cf:Bigg}, is 
the existence of a random variable $X$ with finite expected
 value such that
 \begin{equation}\label{eq:DB}
  \pr(W_n/ m_n > x) \le \pr(X >x),\qquad \forall x \ge 0, n \in \mathbb{N}.
 \end{equation}
Theorem~\ref{th:main0} and \cite[Theorem 1]{cf:Bigg} are not in general comparable.
%

More precisely,
condition \eqref{eq:DB}
does not imply the finiteness of the second moment $m^{(2)}_n=\mathbb{E}[W_n^2]$;
on the other hand, there are examples of sequences $\{W_n\}_{n \in \mathbb{N}}$, satisfying the conditions of Corollary~\ref{cor:main0}(3) 
such that condition \eqref{eq:DB} does not hold for any $X$ with finite first moment. 
Indeed, define $W_n=\alpha_n B_n$ where $B_n$ is a Bernoulli random variable with parameter $1/n^2$ and $\liminf_{n \to +\infty} \alpha_n/n^2>1$:
clearly, $m^{(2)}_n/m^2_n =\mathbb{E}[W_n^2]/\mathbb{E}[W_n]^2=n^2=:g(n)$ and $g(n+1)/g(n) \to 1$ as $n \to +\infty$ and 
Corollary~\ref{cor:main0}(3) applies, while 
$\pr(X >x)  \ge \sup_{n \in \mathbb{N}} \pr(W_n>x)=1/n^2$ for $x \in [(n-1)^2, n^2)$ which implies $\mathbb{E}[X]=\int_0^{+\infty} \pr(X>x) \diff x
=+\infty$.

A partial equivalence between condition~\eqref{eq:DB} and, say, Corollary~\ref{cor:main0}(3) 
can be obtained under the assumptions $g(n)=M \in \mathbb{R}$ for every $n \in \mathbb{N}$.
More precisely, assume that $m_n^{(k)}/m_n^k \le M$ (for some $k>1$) then, it is easy to prove condition
\eqref{eq:DB} for the random variable $X$ with the following tails
\[
 \pr(X>x)
 :=
 \begin{cases}
 1 & \textrm{if } x \le \sqrt[k]{M}\\
  \frac{M}{x^k} & \textrm{if } x > \sqrt[k]{M}.\\
 
 \end{cases}
\]
On the other hand, if condition \eqref{eq:DB} is satisfied for some $X$ and $\mathbb{E}[X^k] \le M$ then
\[
 \frac{m_n^{(k)}}{m_n^k}=\mathbb{E}[(W_n/m_n)^k]=\int_0^{+\infty} \pr(W_n/m_n>\sqrt[k]{x}) \diff x \le \int_0^{+\infty} \pr(X>\sqrt[k]{x})
 \diff x 
 =\mathbb{E}[X^k] \le M.
\]


\section{Branching processes with selection and accessibility percolation}
\label{sec:BPWS}

\subsection{Basic definitions} \label{subsec:basicBPWS}

Given a BPVE, each individual can be assigned a label; this label can be interpreted as a position, a type or a fitness.
We assume that the label is assigned at birth independently for each individual, according to a non-atomic measure $\mu$ on $\mathbb{R}$
(that is, $x \mapsto \mu(-\infty, x)$ is a continuous map). 

By using this label we define a selection mechanism as follows:  
all children of a particle living at $x \in \mathbb{R}$ survive
if and only if they are placed in  the interval $[x, +\infty)$.
This is a Bernoulli-type selection, meaning that every child 
survives (independently) with probability $\mu(x,+\infty)$. Hence, elementary computations show that the generating function after selection of number of 
children of a particle at $x$ of generation $n$  
is $G_{n,x}(z):=\Phi_n \big (z \mu(x,+\infty)+1-\mu(x,+\infty) \big )$. The expected number of children, before selection, of a particle in generation $n$ is
$m_n=\E[W_n]=\Phi_n^\prime(1)= \sum_{i \in \mathbb{N}} i \rho_n(i)$; after selection, given the position $x$ of the parent, is clearly
$G_{n,x}^\prime(1)= \Phi_n^\prime(1) \mu(x,+\infty)=m_n  \mu(x,+\infty)$. We call this process \textit{Branching Process in varying environment with selection} or
\textit{BPWS}. Note that a BPWS 
is a particular case of time-inhomogeneous BRW on an uncountable space.  

One graphical way to construct the BPWS is to generate the Galton-Watson tree of the progeny of the BPVE
before selection (starting with one individual represented by the root of the tree)
and to associate independently to every vertex $v$ a random variable $X_v \sim \mu$.
Clearly the BPWS erases all the subtrees branching from a vertex $v^\prime$ such that $X_{v^\prime}<X_v$, where
$v$ is the parent of $v^\prime$.

This process can be seen as a particular case of a more general family of processes, namely the \textit{accessibility percolation model}, 
introduced in \cite{cf:NK} and inspired by evolutionary biology questions. 
In this model one considers a graph $G=(\mathcal{V},\mathcal{E})$, and associates to each vertex $v\in \mathcal{V}$ a random variable $X_v$ 
belonging to a sequence of independent identically distributed, continuous random variables. The main question of interest 
is the existence of a self-avoiding path of vertices $\{v_i\}_{i \in \N}$ crossing the entire graph, 
such that $X_{v_i} \le X_{v_{i+1}}$ for all $i \in \N$. 
Such a path is called \textit{accessibility path} and the existence of at least one of them, with positive probability, is called 
\textit{accessibility percolation}. 
This question has been addressed mainly on regular trees and hypercubes in \cite{cf:BBS,cf:NK,cf:RZ,cf:SK}. 

In order to study the behaviour of a BPWS,
we denote by $\mathcal{A}_n$ the random set of positions of the particles of generation $n$;
hence, the size of the population is
$N_n:=\# \mathcal{A}_n$ ($\#$ represents the cardinality of a set) almost surely. 

\begin{defn}\label{def:extinction}
 \begin{enumerate}
 \item We define the probability of \textit{local extinction} in $I \subseteq \mathbb{R}$ starting from $x$ by 
 $\pr(\liminf_{n \to +\infty} \{\mathcal{A}_n \cap I=\emptyset\}|\mathcal{A}_0=\{x\})$. We say that there is \textit{local survival} when
 this probability is strictly smaller than 1.
   \item We say that there is \textit{global extinction} starting from $x$ if and only if there is local extinction in $\mathbb{R}$
   starting from $x$.  There is global survival  starting from $x$ if and only if 
   $\pr(\mathcal{A}_n \not= \emptyset, \forall n \in \mathbb{N}|\mathcal{A}_0=\{x\}) 
   \equiv \pr(N_n>0, \forall n \in \mathbb{N}|\mathcal{A}_0=\{x\})>0$.
 \end{enumerate}
\end{defn}

Clearly, given a BPWS global survival is equivalent to accessibility percolation on its (infinite) Galton-Watson tree.
It is clear from the definition that local survival implies global survival.
We note that the progeny of a particle living at $x$ is located in $[x,+\infty)$; moreover, if we are interested in \textit{local survival}, that is,
the survival of the progeny in an interval $(a,b)$, we can disregard (or ``kill'') all particles placed in $[b,+\infty)$. 
Moreover, by using a coupling argument, it is easy to see that the probability of local extinction is nondecreasing with respect to $x \in \mathbb{R}$.

Sometimes it is useful to consider the position of the leftmost particle which we denote by
$l_n:=\min \mathcal{A}_n$ (where $\min(\emptyset):=+\infty$). 
By the nature of the selection process and the fact that $\mu$ is non-atomic, $\{l_n\}_{n \in \mathbb{N}}$ is a strictly increasing
random sequence almost surely. 
%
%
Given any measurable set $I$, if $\mu(I)=0$ there is local extinction in $I$. In general 
there is local survival in $I$ starting from $x$ if and only if 
 $\pr(\mu((\lim_n l_n, +\infty) \cap \mathrm{co(I)})>0)>0$
 (where $\mathrm{co}(I)$ is the \textit{essential} convex hull of $I$, that is the smallest interval $J$ such that $\mu(I\setminus J)=0$). 
 Indeed no contribution to $\mathrm{co}(I)$ can come from its right since particles cannot 
 be placed on the left of their 
 parent and, by definition of $l_n$, there are no particles of generation $n$ in $(-\infty,l_n)$. Once there is survival in $\mathrm{co}(I)$ 
 then if is easy to show, by using a Borel-Cantelli argument, that there is survival in $I$.

\subsection{Results}\label{subsec:mainBPWS}

Throughout this section we consider a BPWS with label measure $\mu$; we denote by $\{m_n\}_{n\in \N}$ and 
$\{m_n^{(2)}\}_{n\in \N}$ the first and second moment of the offspring distribution of the process before selection.
The generating function before selection are denoted by $\{\Phi_n\}_{n \in N}$.

In the following proposition we give a condition for extinction of a BPWS by proving the
absence of an admissible infinite path from the root in the associated accessibility percolation model on the Galton-Watson tree.
This generalizes what was already noted in \cite{cf:CGR14}.

\begin{pro}\label{pro:extinction} Given a BPWS , 
 if there exists $n_0 \ge 0$ such that 
 \[
  \liminf_{n \to +\infty} \frac{\prod_{i=0}^{n-1} m_i}{(n+n_0+1)!}=0,
 \]
 then there is extinction for every starting point $x \in \mathbb{R}$.
\end{pro}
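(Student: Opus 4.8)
The plan is to bound the probability of global survival from $x$ by a first moment (union bound) over accessible paths in the Galton-Watson tree, and then to exploit the fact that the selection weight $\mu(x,+\infty)^n$ decays exponentially in $n$, which absorbs the polynomial gap between $n!$ and $(n+n_0+1)!$.

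First I would introduce $N_n^x$, the number of particles of generation $n$ for the process started from a single particle at $x$, so that global survival from $x$ is the event $\{N_n^x\ge 1\ \forall n\}$ and hence $\pr(\text{survival from }x)\le\inf_n\pr(N_n^x\ge1)\le\liminf_n\E[N_n^x]$. Generating the Galton-Watson progeny before selection and labelling each vertex independently by $\mu$, a vertex $v$ at generation $n$ contributes to $N_n^x$ if and only if its lineage satisfies $x=X_{v_0}\le X_{v_1}\le\cdots\le X_{v_n}$; the $n$ labels $X_{v_1},\dots,X_{v_n}$ are i.i.d.\ $\mu$ and independent of the tree, so this event has probability $\mu(x,+\infty)^n/n!$. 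By linearity of expectation (valid in spite of the strong dependence between distinct lineages) together with $\E[Z_n]=\prod_{i=0}^{n-1}m_i$, I obtain the exact identity
\[
\E[N_n^x]=\Big(\prod_{i=0}^{n-1}m_i\Big)\frac{\mu(x,+\infty)^n}{n!}.
\]

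Next I would pick, using the hypothesis, a subsequence $n_j\to+\infty$ along which $a_j:=\prod_{i=0}^{n_j-1}m_i\big/(n_j+n_0+1)!\to0$. Writing $\rho:=\mu(x,+\infty)$ and using $(n_j+n_0+1)!/n_j!\le(n_j+n_0+1)^{n_0+1}$,
\[
\E[N_{n_j}^x]=a_j\,\frac{(n_j+n_0+1)!}{n_j!}\,\rho^{\,n_j}\le a_j\,(n_j+n_0+1)^{n_0+1}\rho^{\,n_j}.
\]
When $\rho<1$ the right-hand side tends to $0$, since the exponential $\rho^{n_j}$ dominates the fixed-degree polynomial and $a_j\to0$; therefore $\pr(\text{survival from }x)=0$. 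This is the crux of the argument: the naive first moment only yields extinction under $\liminf_n\prod_{i=0}^{n-1}m_i/n!=0$, but the selection factor $\rho^n$ is exactly what makes the weaker-looking hypothesis involving $(n+n_0+1)!$ suffice.

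The main remaining obstacle is the boundary case $\rho=\mu(x,+\infty)=1$, i.e.\ $x$ at or below the essential infimum of $\mu$, where $\rho^{\,n_j}$ no longer decays and the estimate is vacuous. I would dispose of it by conditioning on the first generation: since $\mu$ is non-atomic, every surviving child of the root is almost surely placed at a position $Y$ with $\mu(Y,+\infty)<1$, and its descendants form an independent BPWS started one generation later at $Y$, to which the case $\rho<1$ applies (the index shift merely replaces $\prod_{i=0}^{n-1}$ by $\prod_{i=1}^{n-1}$ and $n!$ by $(n-1)!$, leaving the estimate intact). Thus each child-subtree dies out almost surely, and since there are almost surely only finitely many such children, the whole progeny of $x$ dies out, giving extinction for every $x\in\R$. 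Equivalently, one may run the extinction-probability recursion $\theta_n(x)=1-\Phi_n\big(1-\int_x^{+\infty}\theta_{n+1}\,\diff\mu\big)\le m_n\int_x^{+\infty}\theta_{n+1}\,\diff\mu$ and conclude $\theta_n\equiv0$ from the fact that $\theta_{n+1}=0$ holds $\mu$-almost everywhere.
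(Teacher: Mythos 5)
Your proof is correct and rests on the same core idea as the paper's: a first-moment bound on the number of accessible paths of length $n$ in the Galton--Watson tree, using the fact that $n$ i.i.d.\ non-atomic labels are increasingly ordered with probability $1/n!$. The execution differs in three ways worth noting. (i) You compute the annealed expectation $\E[N_n^x]=\E[Z_n]\,\mu(x,+\infty)^n/n!$ exactly, whereas the paper conditions on the tree $\tau$ and invokes the almost sure asymptotics $s_n(\tau)\sim\prod_{i=0}^{n-1}m_i$ coming from the martingale $Z_n/\prod_{i<n}m_i$; your route is cleaner, since that martingale limit need not equal $1$ (only an eventual bound of the form $s_n(\tau)\le C(\tau)\prod_{i<n}m_i$ is really available, which still suffices for the paper's argument). (ii) The paper randomizes the root label according to $\mu$, obtaining $1/(n+1)!$ per path, and then reaches arbitrary deterministic $x$ through the monotonicity of the extinction probability in the starting point; you keep $x$ fixed and pay for it with the boundary case $\mu(x,+\infty)=1$, which you correctly settle by conditioning on the first generation (equivalently, by your recursion $\theta_n(x)\le m_n\int_x^{+\infty}\theta_{n+1}\,\diff\mu$ together with $\theta_{n+1}=0$ holding $\mu$-almost everywhere). (iii) The paper accounts for the offset $n_0$ by prepending $n_0$ unary generations to the process, while you absorb the factorial gap $(n+n_0+1)!/n!\le(n+n_0+1)^{n_0+1}$ into the exponentially small factor $\mu(x,+\infty)^{n}$; note that this is exactly the point where your argument genuinely needs $\mu(x,+\infty)<1$, so the first-generation reduction in (ii) is essential rather than cosmetic. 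Both proofs are sound; yours is somewhat more self-contained (no martingale input, no monotonicity lemma) at the cost of the extra case analysis.
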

%
%
%

\begin{proof}
We start by supposing that the initial point $x$ is chosen  according to $\mu$.
We use the identification of the BPWS with the associated accessibility percolation model on its infinite Galton-Watson tree:
indeed, if the tree is finite, i.e.~there is extinction before selection, there is extinction also for the BPWS.
Suppose that the Galton-Watson tree $\tau$ is infinite; then, almost surely, the number of leaves at distance $n$ from the root, 
say $s_{n}(\tau)$, has an asymptotic value $s_{n}(\tau) \sim \prod_{i=0}^{n-1} m_i$ as $n \to +\infty$
(use a martingale argument). 
Note that there is a unique
path of length $n$ from the root to each leaf. The probability that a fixed path of length $n$ is admissible is $1/(n+1)!$ since there are $(n+1)!$ possible 
orderings for the $n+1$ labels and all orderings have the same probability. 
Denote by $A_n$ the event ``there exists an admissible path of length $n$ from the root'' and by $\pr_\tau$ the probability conditioned
on the realization $\tau$ of the Galton-Watson tree.
Thus for every $\tau$, $\pr_\tau(A_n) \le s_{n}(\tau) /(n+1)!$. On the other hand, for almost every $\tau$, 
$s_{n}(\tau) /(n+1)! \sim \big ( \prod_{i=0}^{n-1} m_i \big ) /(n+1)!$ as $n \to +\infty$; thus 
$\liminf_{n \to +\infty} s_{n}(\tau) /(n+1)!=0$. Hence,
\[
 \lim_{n \to +\infty} \pr_\tau(A_n) =\liminf_{n \to +\infty} \pr_\tau(A_n) \le \liminf_{n \to +\infty} s_{n}(\tau) /(n+1)!=0.
\]
This yields the result when $n_0=0$.

Suppose $n_0 > 0$ and consider a new BPWS with generating functions $\{\hat \Phi_n\}_{n \in N}$ (before selection) where
$\hat \Phi_n(z):=z$ if $n<n_0$ and $\hat \Phi_n(z):=\Phi_{n-n_0}(z)$ if $n \ge n_0$ (for all $z \in [0,1]$).  This means that 
every particle from generation $0$ to $n_0-1$ has exactly one child. This new BPWS survives with positive probability if and only if
the original one does; indeed, it is enough to note that there is always a positive probability that the unique path from generation $0$ to 
generation $n_0$ is admissible.
The result follows by the first part of the proof by noting that $\prod_{i=0}^{n+n_0-1} \hat m_i=\prod_{i=0}^{n-1} m_i$.

This proves that the probability of extinction is $1$ for almost every starting point $x$ with respect to $\mu$; since
this probability is nondecreasing with respect to the starting point $x$ we have that it is $1$ for all $x \in \mathbb{R}$.
\end{proof}

The interpretation of the previous result in terms of accessibility percolation is the following: given the conditions
of Proposition~\ref{pro:extinction} then for almost every Galton-Watson tree there is no accessibility percolation starting from any label $x$.
The following theorem gives a sufficient condition for survival of a BPWS.


\begin{teo}\label{th:main1}
Suppose that there exists a sequence $\{c_i\}_{i \ge 0}$ of positive real numbers such that $\sum_{i=0}^{+\infty} c_i/m_i <+\infty$ and
\begin{equation}\label{eq:conditionBPWS}
 \begin{cases}
    \sum_{j=n}^{+\infty} \frac{m^{(2)}_j-m_j}{m^2_j} \Big (C^j \prod_{i=n}^{j-1} c_i \Big )^{-1} <+\infty\\
  \inf_{n \in \N} C^n \prod_{j=0}^n c_i >0\\
 \end{cases}
\end{equation}
for some $n \in \N$ and $C>0$. Then the BPWS starting with one particle at $\bar x$ such that
  $\mu(\bar x, +\infty)>0$ survives locally in every $I \subseteq [\bar x, +\infty)$ such that $\mu(I)>0$.
\end{teo}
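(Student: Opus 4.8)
The plan is to reduce the local-survival problem for the BPWS to a global-survival problem for an auxiliary BPVE (without selection), to which Theorem~\ref{th:main0} can be applied. The key observation is that survival in an interval $I \subseteq [\bar x,+\infty)$ with $\mu(I)>0$ can be established by exhibiting a single infinite lineage whose labels stay inside $\mathrm{co}(I)$ and increase; by the Borel--Cantelli remark preceding the theorem, it suffices to produce global survival in $\mathrm{co}(I)=:(\bar x, b)$ (after possibly killing all particles to the right of $b$). So **first I would** set up a comparison BPVE that underestimates the selected process. The idea is to partition the relevant label interval into countably many strips using a sequence of thresholds, and to retain at each generation only those children whose label falls into the appropriate strip; the constants $c_i$ and $C$ in \eqref{eq:conditionBPWS} should encode the $\mu$-measures of these strips.

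**The heart of the argument** is to choose the thresholds so that a child born in generation $j$ has probability at least proportional to $c_j/C^{\,j}$ (roughly, the $\mu$-mass of its admissible strip) of surviving selection and of being placed so that its own children can in turn survive. Concretely, I would introduce points $\bar x = x_0 < x_1 < x_2 < \cdots$ accumulating at $b$, and declare a particle at level $n$ ``good'' if its label lies in $(x_{n}, x_{n+1})$; I then keep only good children of good particles. A good particle of generation $n$ has, before selection, offspring mean $m_n$, and the probability that a given child is good is $\mu(x_{n+1}, x_{n+2})$, which is independent of the others. Hence the pruned process is stochastically dominated below by a BPVE whose $n$th offspring law is the original $W_n$ thinned by an independent Bernoulli of parameter $p_n := \mu(x_{n+1},x_{n+2})$. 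For this thinned BPVE the first moment is $\tilde m_n = m_n p_n$ and the second moment satisfies $\tilde m_n^{(2)} - \tilde m_n = (m_n^{(2)}-m_n) p_n^2$, so that
\[
 \frac{\tilde m_n^{(2)}-\tilde m_n}{\tilde m_n} \Big(\prod_{i=n}^{j} \tilde m_i\Big)^{-1}
 = \frac{m_j^{(2)}-m_j}{m_j}\, p_j \Big(\prod_{i=n}^{j} m_i p_i\Big)^{-1}.
\]
The plan is then to choose the thresholds so that $p_i$ behaves like $c_i / (C \, m_i)$ up to constants—using $\sum_i c_i/m_i < +\infty$ to guarantee that the $x_i$ stay bounded (hence accumulate at some finite $b$ with $\mu(\bar x,b)>0$)—and to check that \eqref{eq:conditionBPWS} is exactly the translation of condition~(1) of Theorem~\ref{th:main0} for the thinned BPVE. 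A short computation should convert the $\big(C^j\prod_{i=n}^{j-1}c_i\big)^{-1}$ and $\inf_n C^n\prod c_i > 0$ appearing in \eqref{eq:conditionBPWS} into the series-convergence and positive-product conditions for $\{\tilde m_n\}$.

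**The main obstacle** I anticipate is the bookkeeping that links the geometric factor $C$ and the weights $c_i$ to an honest choice of thresholds $\{x_i\}$ under an arbitrary non-atomic $\mu$: one must simultaneously arrange that the strips have the prescribed masses $p_i$, that the thresholds remain below a finite $b$ with $\mu(\bar x, b)>0$ (this is where the summability $\sum c_i/m_i<+\infty$ enters, preventing the $x_i$ from escaping to $+\infty$), and that the resulting thinned moments satisfy Theorem~\ref{th:main0}(1). Since $\mu$ is only assumed non-atomic, I would invoke its continuity to solve $\mu(x_n, x_{n+1}) = p_n$ for each successive threshold, which is always possible as long as $\sum_n p_n < \mu(\bar x,+\infty)$; controlling this total mass is precisely the delicate point. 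Once global survival of the thinned BPVE is secured via Theorem~\ref{th:main0}, global survival of the BPWS in $(\bar x, b)$ follows by domination, and local survival in the original $I$ follows from the Borel--Cantelli argument already recorded in the text. I would finish by noting that the value of $n$ for which \eqref{eq:conditionBPWS} holds can be absorbed, as in Proposition~\ref{th:main0ext} and Theorem~\ref{th:main0}, by the positive probability of the process surviving the first finitely many generations.
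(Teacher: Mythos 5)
Your proposal is correct and takes essentially the same route as the paper's proof: reduce to local survival in an interval via the Borel--Cantelli remark, absorb $C$ into the $c_i$, construct thresholds $\{x_n\}$ with $\mu(x_n,x_{n+1})=p_n$ where eventually $p_n=c_n/m_n$ (the summability $\sum_i c_i/m_i<+\infty$ keeping the total mass strictly below $\mu(\bar x,+\infty)$), dominate the BPWS from below by the Bernoulli-thinned BPVE with $\widetilde m_n=c_n$ and $\widetilde m_n^{(2)}-\widetilde m_n=p_n^2(m_n^{(2)}-m_n)$, and invoke Theorem~\ref{th:main0}. The only cosmetic difference is the treatment of the first finitely many generations, which the paper handles by assigning them a fixed retention probability $\delta/(2n_0)$ inside the same thinned process rather than by a separate positive-probability argument.
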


%
%
%

\begin{proof}

Note that it is enough to prove local survival in $[\bar x, y)$ where 
$\mu(\bar x, y)>0$.
Indeed, if $\mu([y, +\infty) \cap I)>0$ then, according to the Borel-Cantelli lemma
  local survival in $[\bar x, y)$ implies that an infinite number of particles will be placed in 
  $[y, +\infty) \cap I$.
Furthermore, $C$ can always be chosen as equal to 1, by using a new sequence $c^\prime_i:=C c_i$ instead
of $c_i$; thus, 
we assume, without loss of generality, $C=1$. Finally, observe that if the condition~\eqref{eq:conditionBPWS} holds for
some $n=n_1$ then it holds for every $n \ge n_1$.

Fix $\delta \in (0, \mu(\bar x, +\infty))$ and, using the continuity of $\mu$, pick $y$ such that $\mu(\bar x, y)=\delta$.
 Let $n_0 \in \mathbb{N}$ be such that $\sum_{n \ge n_0} c_n/m_n < \delta/2$; $n_0$ can always be chosen larger than $n_1$.
Let $p_n:=\delta/(2n_0)$ for all $n<n_0$ and $p_n:=c_n/m_n$ for all $n\ge n_0$. 
%
 We construct recursively a strictly increasing sequence $\{x_n\}_{n \in \mathbb{N}}$
 satisfying
 \[
  \begin{cases}
  x_0=\bar x,\\
   \mu(x_n,x_{n+1})=p_n.\\   
  \end{cases}
 \]
Clearly $\sum_{n \ge n_0} p_n < \delta/2$ and $\lim_{n \to +\infty} x_n < y$. Indeed
\[
\begin{split}
 \mu(\bar x , \lim_{n \to +\infty} x_n)&= \sum_{n \in \mathbb{N}} \mu(x_n, x_{n+1}) 
=\sum_{n < n_0} p_n +  \sum_{n \ge  n_0} p_n < 
\delta=\mu(\bar x, y).\\
\end{split}
 \]
Thus, if we can prove local survival of the BPWS in $[\bar x , \lim_{n \to +\infty} x_n)$ we have local survival in $[\bar x, y)$.

We proceed by constructing a BPVE which is stochastically dominated by the BPWS as follows: at each generation $n \ge 1$ we obtain a BPVE by
removing all the particles of the BPWS 
outside the interval  $[x_{n-1}, x_{n})$ (along with their progenies). More precisely the BPVE starts with one particle at $\bar x$ which breeds according
to the law of $W_0$ and kills all the particles outside the interval $[x_0, x_1)$; this is equivalent to removing each child independently with probability
$1-p_0$. Given the $n$th generation, we construct the next one by keeping all children of the particles of the $n$th generation which are placed
in the interval $[x_{n}, x_{n+1})$; again, this is like removing each newborn independently with probability $1-p_n$. This is a BPVE which is dominated by the 
original BPWS since if a particle 
is located at $x \in[x_{n-1}, x_{n})$, in the BPWS we keep every child in the interval $[x, +\infty)$ while in the BPVE we keep only those children
which are placed in $[x_{n}, x_{n+1}) \subset [x, y) \subset [x, +\infty)$. Hence, the survival of the BPVE implies the local survival of
the BPWS in $[\bar x, y)$.

Denote by $\widetilde m_n$ and $\widetilde m_n^{(2)}$ the first and second moments respectively of this BPVE. They are related to the moments of the original process:
$\widetilde m_n=p_nm_n$ and $\widetilde m^{(2)}_n-\widetilde m_n=p_n^2(m^{(2)}_n-m_n)$.
Note that $\widetilde m_n=c_n$ and $(\widetilde m^{(2)}_n-\widetilde m_n)/\widetilde m_n 
\big (\prod_{i=n_0}^{n} \widetilde m_i \big )^{-1}=(m^{(2)}_n- m_n)/ m^2_n 
\big (\prod_{i=n_0}^{n-1}  c_i \big )^{-1}$  for all $n\ge n_0$. Theorem~\ref{th:main0} yields the conclusion.
\end{proof}


The following corollary is the analogous of Corollary~\ref{cor:main0} for BPWS, hence we omit the proof
(by definition $\prod_{i=n}^{n-1} c_i:=1$).

\begin{cor}\label{cor:main1}
Suppose that there exists a sequence $\{c_i\}_{i \ge 0}$ of positive real numbers such that $\sum_{i=0}^{+\infty} c_i/m_i <+\infty$ and
one of the following conditions holds for some $C>0$:
 \begin{enumerate}
  \item $\sum_{j=n}^{+\infty} \frac{m^{(2)}_j}{m^2_j} \Big ( C^j \prod_{i=n}^{j-1} c_i \Big )^{-1} <+\infty$ for some $n \in \N$;
  \item $\limsup_{n \to +\infty} \sqrt[n]{m^{(2)}_n/{m^2_n}} < C \liminf_{n \to +\infty} \sqrt[n]{\prod_{i=0}^{n-1} c_i }$;
  \item there exists a function $g:\N\to [1,+\infty)$ such that $m^{(2)}_n/{m^2_n} \le g(n)$ for every sufficiently large $n$ and 
  $\limsup_{n \to +\infty} g(n+1)/g(n) < C \liminf_{n \to +\infty} \sqrt[n]{\prod_{i=0}^{n-1} c_i }$;
    \item $\lim_{n \to +\infty} c_n=+\infty $ and there exists $M,k \ge 1$ such that $m_n^{(2)}/m_n^2 \le k M^n$ for all sufficiently large $n\in \mathbb{N}$;
  \end{enumerate}
then the BPWS starting with one particle at $\bar x$ such that
  $\mu(\bar x, +\infty)>0$ survives locally in every $I \subseteq [\bar x, +\infty)$ such that $\mu(I)>0$. 
\end{cor}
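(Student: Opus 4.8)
The plan is to follow, almost verbatim, the template of the proof of Corollary~\ref{cor:main0}, exploiting the observation that the hypothesis~\eqref{eq:conditionBPWS} of Theorem~\ref{th:main1} is structurally condition~(1) of Theorem~\ref{th:main0} read for the \emph{effective} first moments $c_i$ (which, as the proof of Theorem~\ref{th:main1} shows, are the first moments of the dominated BPVE), with the extra parameter $C$ recorded by the geometric weight $C^j$. Since the standing assumption $\sum_{i\ge 0}c_i/m_i<+\infty$ is common to all four cases, it suffices to show that each of (1)--(4) forces~\eqref{eq:conditionBPWS} to hold for a suitable $n$; the local survival claim then follows immediately by invoking Theorem~\ref{th:main1}. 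Accordingly I would prove the chain of implications $(4)\Rightarrow(3)\Rightarrow(2)\Rightarrow(1)$, and finally that (1) implies condition~\eqref{eq:conditionBPWS}, under the correspondence $m_i\leftrightarrow c_i$.

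For the base step I would use the elementary inequality $m^{(2)}_j/m_j^2\ge 1$, which follows from $\mathrm{Var}(W_j)\ge 0$ and is the exact analogue of the bound $m^{(2)}_j/m_j\ge 1$ used in Corollary~\ref{cor:main0}. Since $m^{(2)}_j-m_j\le m^{(2)}_j$, the first series in~\eqref{eq:conditionBPWS} is dominated term by term by the series in~(1) and hence converges. For the second line of~\eqref{eq:conditionBPWS}, the bound $m^{(2)}_j/m_j^2\ge 1$ gives $\sum_j\big(C^j\prod_{i=n}^{j-1}c_i\big)^{-1}<+\infty$, so the products $Q_j:=C^j\prod_{i=n}^{j-1}c_i$ diverge to $+\infty$; combining this with the telescoping identity $Q_{j+1}=C\,c_j\,Q_j$ yields $C^k\prod_{i=0}^{k}c_i\to+\infty$, and as all these quantities are strictly positive their infimum over $k$ is positive, which is precisely $\inf_n C^n\prod_{i=0}^n c_i>0$.

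The three remaining implications are then literally those of Corollary~\ref{cor:main0}, because (2), (3) and (4) constrain only the quotients $m^{(2)}_n/m_n^2$, the majorant $g$, and the growth of $\prod c_i$, none of which involves the selection step. For $(2)\Rightarrow(1)$ I would apply Cauchy's root test to the series in~(1): its $j$-th root behaves like $\sqrt[j]{m^{(2)}_j/m_j^2}\,\big(C\sqrt[j]{\prod_{i=n}^{j-1}c_i}\big)^{-1}$, whose $\limsup$ is $<1$ exactly by the strict inequality in~(2), the factor $C$ there matching the weight $C^j$. For $(3)\Rightarrow(2)$ I would reproduce the $K_\varepsilon$ estimate of Corollary~\ref{cor:main0}, giving $\limsup_n\sqrt[n]{m^{(2)}_n/m_n^2}\le\limsup_n g(n+1)/g(n)$, and then appeal to the hypothesis of~(3). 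For $(4)\Rightarrow(3)$ I would take $g(n):=kM^n$, so that $\limsup_n g(n+1)/g(n)=M$; since $c_n\to+\infty$ forces $\liminf_n\sqrt[n]{\prod_{i=0}^{n-1}c_i}=+\infty$ (Ces\`aro), the required strict inequality holds for every $C>0$.

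I expect the only point needing genuine care, rather than mechanical transcription, to be the base step: correctly tracking the constant $C$ and the index shift between $\prod_{i=n}^{j-1}$ and $\prod_{i=n}^{j}$, and deducing the infimum condition from mere summability by means of the telescoping identity. Everything else is a faithful rerun of the proof of Corollary~\ref{cor:main0} with $m_i$ replaced by $c_i$ and the scaling constant $C$ inserted, after which Theorem~\ref{th:main1} supplies the asserted local survival.
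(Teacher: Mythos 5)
Your proposal is correct and matches exactly what the paper intends: the paper omits this proof, stating only that the corollary is the analogue of Corollary~\ref{cor:main0}, and your argument is precisely that analogue --- reducing each of (1)--(4) to condition~\eqref{eq:conditionBPWS} (with $m_i$ replaced by $c_i$ and the weight $C^j$ tracked through the root test) and then invoking Theorem~\ref{th:main1}. The one step the paper leaves entirely implicit, deducing $\inf_k C^k\prod_{i=0}^k c_i>0$ from the summability in (1) via $m^{(2)}_j/m_j^2\ge 1$, is handled correctly in your base step.
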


As in Example~\ref{exm:continuous}, explicit examples of laws of $W_n$ satisfying the conditions of Theorem~\ref{th:main1} are: 
\begin{enumerate}
  \item geometric laws: $W_n \sim \mathcal{G}(1/(1+m_n))$ such that $\sum_{i=0}^{+\infty} 1/m_n< +\infty$. 
\item Poisson laws: $W_n \sim \mathcal{P}(m_n)$ where $\sum_{i=0}^{+\infty} 1/m_i<+\infty$;
 \item binomial laws: $W_n \sim \mathcal{B}(k_n, r_n)$ such that $\sum_{i=0}^{+\infty} 1/k_ir_i < +\infty$;
\end{enumerate}
in particular the geometric law corresponds to a continuous-time branching process with selection.

\begin{rem}\label{rem:nalpha}
 Consider a BPWS such that $m_n \sim n^{\alpha}$. For $\alpha < 1$, Proposition~\ref{pro:extinction} holds and there is extinction; 
for $\alpha >1$ (provided that equation~\eqref{eq:conditionBPWS} is satisfied) then by 
Theorem~\ref{th:main1} there is survival. Thus, there is phase transition at the critical exponent $\alpha=1$.

More generally, one can show that (1) if $m_n/(n+\bar n) \le 1$ for all sufficiently large $n$ (and some $\bar n \in \N$) then there is extinction, (2) if
$\liminf m_n/n^\alpha >0$ for some $\alpha >1$ (and equation~\eqref{eq:conditionBPWS} is satisfied)
then there is survival.

\end{rem}

%
%

The role played by 
the sequence $\{c_i\}_{i \ge 0}$ is twofold: on the one hand it allows to treat cases where $\sum_{i=0}^{+\infty} 1/m_i =+\infty$
and, on the other hand, when $\sum_{i=0}^{+\infty} 1/m_i <+\infty$ it allows 
larger upper bounds for ${(m^{(2)}_j-m_j)}/{m^2_j}$. In the following example 
we analyze two explicit cases.

\begin{exmp}\label{exmp:ci}
  Let $\{W_n\}_{n \in \N}$ such that 
 \[
  m_n:=
  \begin{cases}
k+1 & n=2^k\\
(n+1)^2 & \textrm{ otherwise}
  \end{cases}
 \]
 and
  \[
  c_n:=
  \begin{cases}
1/(k+1) & n=2^k\\
2 & \textrm{ otherwise.}
  \end{cases}
 \]
Then $\sum_{n \in \N} 1/m_n=+\infty$ while $\sum_{n \in \N} c_n/m_n <+\infty$. Moreover, if $b \in (1,2)$,
it is easy to prove that $\prod_{i=0}^n c_i = 2^{n-\lfloor \log_2(n) \rfloor}/( \lfloor \log_2(n) \rfloor +1)! \ge b^n$ eventually as $n \to +\infty$;
whence, if $\limsup_{n \to +\infty} \sqrt[n]{m_n^{(2)}/m^2_n} < +\infty$  
then Corollary~\ref{cor:main0}(2) applies and there is survival for the BPWS.
 
 Consider now a process
 where $m_n=b^n$ (for some $b>1$). If $m_n^{(2)}/m_n^2 \le K \alpha^n c^{n(n-1)/2}$ for some $K >0$, $c \in (1,b)$ and $\alpha \ge 1$ then
 Corollary~\ref{cor:main1}(1) applies (with $c_i:= (\alpha+1) c^i$, $C=1$ and $n=0$) and there is survival for the BPWS. 
\end{exmp}

\section*{Acknowledgements}

The authors acknowledge financial support from INDAM-GNAMPA (Istituto Nazionale di Alta Matematica).
The second author also thanks FAPESP (Grant 2015/03868-7) for financial support. Part of this work was carried out during a stay of the second author at  
Laboratoire de Probabilit\'es et Mod\`eles Al\'eatoires, Universit\'e Paris-Diderot, and a visit at Universit\`a di Milano-Bicocca. He is grateful for 
their hospitality and support.

\end{document}